\theoremstyle{plain}
\newtheorem{theorem}{Theorem}[section]
\newtheorem{lemma}{Lemma}[section]
\newtheorem{corollary}{Corollary}[section]
\theoremstyle{definition}
\theoremstyle{definition}
\newtoks{\thehExample}
\newtheorem*{Example}{\the\thehExample}
\newenvironment{Ex}[1]{\thehExample{#1}\begin{Example}}{\end{Example}}
\begin{document}
\selectlanguage{english}
\title[Nets of graded $C^*$-algebras]
{Nets of graded $C^*$-algebras over partially ordered sets}

\author{S.~A.~Grigoryan, E.~V.~Lipacheva, A.~S.~Sitdikov}

\address{Kazan State Power \\
Engineering University,\\
Krasnoselskaya st., 51 \\420066,  Kazan, Russia}

\email{gsuren@inbox.ru}

\email{elipacheva@gmail.com}

\email{airat$_{-}$vm@rambler.ru}

\maketitle

\begin{abstract}
The paper deals with $C^*$-algebras generated by a net of Hilbert
spaces over a partially ordered set.
 The family of those algebras constitutes a net of $C^*$-algebras over the same  set.
It is shown that every such an algebra is graded by the first
homotopy group of the partially ordered set. We consider inductive
systems of $C^*$-algebras  and their   limits over maximal directed
subsets. We also study properties of morphisms for nets of Hilbert
spaces as well as nets of $C^*$-algebras.

\end{abstract}

\

\emph{keywords:}{$C^*$-algebra, graded $C^*$-algebra, partially
ordered set, net of $C^*$-algebras,  net of Hilbert spaces, path
semigroup, the first homotopy group, inductive limit }

\maketitle

\section{Introduction}

The paper is devoted to the construction and the study of nets consisting of $C^*$-algebras
generated by nets of Hilbert spaces over  partially ordered sets. One of the directions in  the applications of such nets is algebraic quantum field theory. In a spacetime the family of all open bounded regions is a partially
ordered set  under the inclusion relation
\cite{HK,H,Khoruzhi}. One associates to these regions  the $C^*$-algebras
of local observables which can be measured in the pertinent regions. The family of all those algebras indexed by the regions of a spacetime is called a net of
 $C^*$-algebras.

The main objects of the study in the paper are the local
$C^*$\nobreakdash-algebras (see \S~5) generated by a triple
$$
\big(K,H_a,\gamma_{ba}\big)_{a\leq b\in
K},
$$
where $K$
is a partially ordered set, $H_a$
is a Hilbert space and
$\gamma_{ba}:H_a\rightarrow H_b$ is an isometric embedding. According to papers
\cite{R,V,RuzziVasselli}, we call that triple a net of Hilbert spaces over $K$. The family of  the $C^*$-algebras  constitutes a net over the same set $K$. Each algebra in this net is graded by the first homotopy group $\pi_1(K)$ for the partially ordered set
$K$.

Moreover, we introduce the notion of the corona for a net consisting of the local  $C^*$-algebras. The algebras in the corona are called the quasi-local algebras. It is shown that
these algebras are also  $\pi_1(K)$-graded.

The motivation for our work comes from papers \cite{R,V,RuzziVasselli} in which
 the nets of the $C^*$-algebras of observables are studied for a curved spacetime and a~spacetime manifold with specific topological features.

We have studied earlier the $C^*$-algebras generated by representations of ordered semigroups \cite{GS, AGL, LO1, LO2, LO3, GL}.
The present paper is a continuation of the study begun in the article \cite{GLS}.  There we  dealt with the $C^*$-algebra generated by the path semigroup  in a partially ordered set.

\section{Paths and loops on a partially ordered set}

Let $K$
be  a partially ordered set with an order relation~$\leq$, which is reflexive, antisymmetric and transitive.
Elements $a$ and $b$ are said to be \emph{comparable} in
$K$, if $a\leq b$ or $b\leq a.$ The set $K$ is said to be
\emph{upward directed}, if for any $a,b\in K$ there exists
$c\in K$ such that $a\leq c$ and $b\leq c$.

Further, we consider paths on $K$. Ordered pairs $(b,a)$
for $b\leq a$ and  $\overline{(b,a)}$ for $b\geq a$ are called
\emph{elementary paths} on $K$.   We also define \emph{the reverse elementary paths}
$s^{-1}=\overline{(a,b)}$ for $s=(b,a)$ and $s^{-1}=(a,b)$ for
$s=\overline{(b,a)}$. The elements $\partial_1s=a$ and $\partial_0s=b$ are called,
respectively, \emph{the starting point} and  \emph{the ending point} of the elementary paths. A pair $(a,a)=\overline{(a,a)}=i_a$ is called
\emph{a trivial path}.

Throughout we consider sequences of elementary paths of the following form:
$$
\overline{p}=s_n\ast s_{n-1}\ast\ldots\ast s_1,
$$
where $\partial_0s_{i-1}=\partial_1s_i$ for $i=2,\ldots,n.$
Here, the elements
$\partial_1\overline{p}=\partial_1s_1$ and $\partial_0\overline{p}=\partial_0s_n$
are, respectively,  the starting point and  the ending point of the sequence $\overline{p}$.
The reverse sequence defined as the sequence
$$
\overline{p}^{-1}=s_1^{-1}\ast s_{2}^{-1}\ast\ldots\ast
s_n^{-1}.
$$

Extending the operation ``$\ast$'', we define the multiplication operation for the sequences of
elementary paths
$
\overline{p}=s_n\ast\ldots\ast s_k$, and
$\overline{q}=s_{k-1}\ast\ldots\ast s_1
$
satisfying the condition
$
\partial_1s_k=\partial_1\overline{p}=\partial_0\overline{q}=\partial_0s_{k-1}
$
as follows:
$
\overline{p}\ast \overline{q}=s_n\ast\ldots\ast s_k\ast s_{k-1}\ast\ldots\ast
s_1.
$

We denote by $\overline{S}$ the set of all sequences of elementary paths endowed with the operation~``$\ast$''. It is clear that the operation
``$\ast$'' is associative.

Let us define an equivalence relation on the set $\overline{S}$. To this end, for
all elements $a,b,c\in K$ such that $a\leq b\leq c$, we put
\begin{align}
  (a,b)\ast(b,c)&\sim(a,c);\label{sim1}
    \\
  \overline{(c,b)}\ast\overline{(b,a)}&\sim\overline{(c,a)};\label{sim2}
    \\
(a,b)\ast \overline{(b,a)}&\sim i_a;\label{sim3}
\\
\overline{(b,a)}\ast (a,b)&\sim i_b.\label{sim4}
\end{align}

It is worth noting that  (\ref{sim1})--(\ref{sim4}) imply the following equivalences:
$$
(a,b)\ast i_b\sim(a,b);
$$
$$
  i_a\ast(a,b)\sim(a,b);
$$
$$
\overline{(b,a)}\ast i_a\sim\overline{(b,a)};
$$
$$
 i_b\ast\overline{(b,a)}\sim\overline{(b,a)};
$$
$$
i_a\ast i_a\sim i_a.
$$

We put $\overline{p}\sim\overline{q}$, where
$\overline{p},\overline{q}\in \overline{S}$, if the sequence $\overline{p}$
can be obtained from the sequence $\overline{q}$ (and $\overline{q}$  from
$\overline{p}$) by means of a finite number of relations
(\ref{sim1})--(\ref{sim4}).

One can easily verify that the following properties are fulfilled in $\overline{S}$:

1. for every sequence $\overline{p}\in \overline{S}$
with $\partial_0\overline{p}=a$ and $\partial_1\overline{p}=b$ the relations
 $\overline{p}^{-1}\ast \overline{p}\sim
i_b,\hspace{0.2cm} \overline{p}\ast \overline{p}^{-1}\sim i_a$ hold;

2.  for every sequence $\overline{p}\in \overline{S},$
with $\partial_0\overline{p}=a$ and $\partial_1\overline{p}=b$
the relations $i_a\ast
\overline{p}\sim\overline{p}\sim\overline{p}\ast i_b$ hold.

Let $p=[\overline{p}]$ be an equivalence class containing a sequence
of elementary paths $\overline{p}$. For equivalence classes $p$ and $q$ we define the multiplication operation ``$\ast$''  as follows: if
$\partial_1\overline{p}=\partial_0\overline{q}$ then we set
$$
p\ast q=[\overline{p}\ast\overline{q}]=\{\overline{s}\sim\overline{p}\ast\overline{q} \
| \ \overline{s}\in\overline{S}\}.
$$

Further, let us consider the quotient set of  $\overline{S}$ by the equivalence relation. It is
obvious that the quotient set $\overline{S}/_{\sim}$ is a groupoid. Adding a formal symbol
 0 and putting
$$
p\ast 0=0, \quad 0\ast p=0
$$
 for every
$p\in \overline{S}/_{\sim}$, one may turn the groupoid $\overline{S}/_{\sim}$
 into a semigroup denoted by
 $$
S=\overline{S}/_{\sim}\cup\{0\}.
$$
 Then for every $p,q\in S$ we have
$$
p\ast q=\begin{cases}
       [\overline{p}\ast \overline{q}], & \mbox{if $p\neq 0$, $q\neq 0$ and
       $\partial_1\overline{p}=\partial_0\overline{q}$,}\\
       0, & \mbox{otherwise.}
       \end{cases}
$$

The semigroup $S$ is called \emph{the path semigroup}. An element $p\in S$ is called  \emph{a path} from the point
 $\partial_1p=\partial_1\overline{p}$ to the point
$\partial_0p=\partial_0\overline{p}$. Thus, we identify
all equivalent sequences of elementary paths. That equivalence class is called a path on $K$.

An element $\overline{p}\in \overline{S}$ is called \textit{a loop in
$\overline{S}$} if the equality $\partial _{0} \overline{p}=\partial _{1}
\overline{p}$ holds. For $a\in K$ we denote by $\overline{G}_a$
the set of all loops whose base point is $a$. The set
$\overline{G}_a$ is a semigroup. For $a\in K$ we denote by  $G_{a}$ the set of all equivalence classes of loops whose base point is $a$. In~\cite{GLS} it is shown
$G_a$ is a subgroup in $S$  with the unit $[i_a]$  and other properties of $G_a$ and $S$ are described. In particular, it is proved that if $K$ is an upward directed set then $G_a$ is trivial.

The set $K$ is said to be \emph{path connected} provided that for every  $a,b\in K$
there exists a path $p$ such that $\partial_0p=a,$ $\partial_1p=b.$
 It is shown in \cite{GLS} that for a path-connected set $K$ the isomorphism $G_a\cong G_b$ holds whenever $a,b\in K$. In particular, the mapping
 $\sigma_{ba}:G_a\rightarrow G_b$ defined by the formula
\begin{equation}\label{sigma}
\sigma_{ba}(p)=[\overline{(b,a)}\ast \overline{p}\ast
(a,b)]
\end{equation}
is isomorphism, where $a\leq b$, $\overline{p}\in p$.

In what follows, we assume that the set $K$
is path connected.

The notion of \emph{the first homotopy {\rm(}fundamental\/{\rm)} group} $\pi_1(K)$ for a partially ordered set $K$ is given in \cite{R, RuzziVasselli}. The group~$\pi_1(K)$ is a quotient set of  the set of all paths on $K$ that start and end at the same point by the homotopy equivalence relation. Two paths are  said to be \emph{homotopy equivalent} if one can
be obtained from the other by a finite number of elementary deformations (see \cite{R,
RuzziVasselli}).

\begin{theorem} For every $a\in K$ there exists an isomorphism $\pi_1(K)\cong G_a$.
\end{theorem}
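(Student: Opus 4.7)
The plan is to exhibit a natural isomorphism $\Phi: G_a \to \pi_1(K)$ whose construction is essentially tautological once one matches the two equivalence relations defined on loops. Since both groups are built from sequences of elementary paths with base point $a$ and concatenation as operation, the identity on representatives is the candidate map; the work lies entirely in showing that the two equivalence relations agree.

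First, I would define $\Phi$ on representatives: send a loop $\overline{p}\in\overline{G}_a$ to its homotopy class $[\overline{p}]_{\pi_1}$ in $\pi_1(K)$. To show $\Phi$ descends to $G_a$, I need the implication
\[
\overline{p}\sim\overline{q} \ \Longrightarrow\ [\overline{p}]_{\pi_1}=[\overline{q}]_{\pi_1},
\]
which reduces (by the definition of $\sim$) to checking that each of the four generating equivalences (\ref{sim1})--(\ref{sim4}) realizes a homotopy. Relations (\ref{sim1}) and (\ref{sim2}) are instances of ``inserting/removing a $1$-simplex'' in the sense of Ruzzi--Vasselli, while (\ref{sim3}) and (\ref{sim4}) are the standard cancellations of a path with its reverse; each is an elementary deformation, so $\Phi$ is well defined. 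The multiplicativity $\Phi(p\ast q)=\Phi(p)\Phi(q)$ is immediate because in both groups the product is induced by concatenation of representative sequences of elementary paths, and the unit $[i_a]\in G_a$ is sent to the constant loop.

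For the inverse, I would construct $\Psi:\pi_1(K)\to G_a$ by the same identity-on-representatives prescription. Here I must verify the non-trivial direction: every elementary deformation between loops based at $a$ can be realized by a finite chain of applications of (\ref{sim1})--(\ref{sim4}). Unpacking the definition from \cite{R, RuzziVasselli}, elementary deformations come from the simplicial structure of $K$ — essentially factoring a step $(a,c)$ through an intermediate element $(a,b)\ast(b,c)$ when $a\le b\le c$, together with the trivial-loop cancellations — which are exactly what (\ref{sim1})--(\ref{sim4}) codify. Checking this term by term is the principal technical obstacle and is where I expect to spend most of the effort; once it is done, $\Psi$ is well defined, and $\Phi\circ\Psi$, $\Psi\circ\Phi$ are each the identity on representatives.

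Finally, since the statement is ``for every $a\in K$'', I would note that for a different basepoint $b$ the isomorphism $\sigma_{ba}:G_a\to G_b$ from (\ref{sigma}) together with path connectedness of $K$ gives $G_a\cong G_b$, matching the basepoint independence of $\pi_1(K)$, so it suffices to have treated a single $a\in K$. The main obstacle remains the comparison of elementary deformations with the relations (\ref{sim1})--(\ref{sim4}); everything else is bookkeeping inherent to identifying two quotients of the same set of sequences of elementary paths.
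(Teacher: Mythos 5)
Your proposal follows essentially the same route as the paper: the theorem is reduced to showing that the equivalence relation generated by (\ref{sim1})--(\ref{sim4}) on loops based at $a$ coincides with the homotopy equivalence of \cite{R,RuzziVasselli}, after which the identity on representatives induces the isomorphism $G_a\cong\pi_1(K)$. The verification you single out as the principal obstacle is exactly what the paper handles by citing \cite{GLS} for one implication and by observing that the elementary paths $(a,b)$ and $\overline{(b,a)}$ are $1$-simplices with support $b$, so that the moves (\ref{sim1})--(\ref{sim4}) correspond precisely to elementary deformations.
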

\begin{proof} To prove the theorem it is sufficient to show that the equivalence relation
given by formulas (\ref{sim1})--(\ref{sim4}) coincides with the homotopy equivalence.

In \cite{GLS} the authors show that if two paths are in the equivalence  relation
defined by (\ref{sim1})--(\ref{sim4}) then they are homotopy equivalent.

For the converse implication we note that the elementary paths $(a,b)$ and $\overline{(b,a)}$ are
1-simplices with the support $b$. Therefore every equivalence
(\ref{sim1})--(\ref{sim4}) is an elementary deformation of paths.
\end{proof}

\section{Mappings and cycles on a Hilbert space}

Let a net of Hilbert spaces
$$
(K,H_a,\gamma_{ba})_{a\leq b\in K}
$$
over  $K$ be given. Here,  $H_a$
is a Hilbert space with a basis $\{e_n^a\}_{n=1}^\infty$, and
$$
\gamma_{ba}:H_a\rightarrow H_b
$$
is an isometric embedding for $a\leq b$,
which transforms the basis $\{e_n^a\}_{n=1}^\infty$ into the basis $\{e_n^b\}_{n=1}^\infty$ and satisfies the equality
$$
\gamma_{ca}=\gamma_{cb}\circ\gamma_{ba},
$$
whenever $a\leq b\leq
c$. If  $a=b$ then $\gamma_{ba}$ is the identity mapping.

For every $a\in K$ we define the set $S_{a} =\left\{p\in S\; \; |\; \;
\partial_{0} p=a\right\}$.

Further, let us consider the Hilbert space of all square summable complex-valued functions on $S_{a}$
$$
l^{2} (S_a)=\bigg\{f:S_a\to \mathbb{C}\; \; |\; \; \sum _{p\in S_a}|f(p)|^{2}  <\infty \bigg\}
$$
with the inner product given by
$$
\left\langle f,g\right\rangle =\sum
_{p\in S_a}f(p)\overline{g(p)}.
$$
The family of functions $\left\{e_{p}
\right\}_{p\in S_a} $ is an orthonormal basis in $l^2(S_a)$.
Here, the equality $e_{p} (p')=\delta _{p,p'} $ holds for $p'\in S_a$, where $\delta _{p,p'}$ stands for the Kronecker symbol.

Let us consider the space $\mathcal{H}=\bigoplus_{a\in K}(H_a\otimes l^2(S_a))$.
For every pair $a,b\in K$ satisfying the condition $a\leq b$ we define the partial isometry $\chi_a^b:\mathcal{H}\rightarrow \mathcal{H}$ as follows:
$$
\chi_a^b(h\otimes e_p)=
 \begin{cases}
 \gamma_{ba}(h)\otimes e_{[\overline{(b,a)}\ast\overline{p}]}, &
 \mbox{if $h\in H_a$ and $\partial_0 p=a$, $\overline{p}\in p$,}
 \\
 0, & \mbox{otherwise.}\\ \end{cases}
$$

We note that the following inclusion holds:
$$
\chi_a^b(H_a\otimes l^2(S_a))\subseteq H_b\otimes
l^2(S_b).
$$
For the operator $\chi_a^{b}$ the conjugate operator
$\chi_a^{b\,*}:\mathcal{H}\rightarrow \mathcal{H}$ is defined by the formula
$$
\chi_a^{b\,*}(h\otimes e_p)=
 \begin{cases}
 h'\otimes e_{[(a,b)\ast\overline{p}]}, & \mbox{if there exists $h'\in H_a$ such that}
 \\
 & \mbox{ $h=\gamma_{ba}(h')$ and $\partial_0 p=b$, $\overline{p}\in p$;}
 \\
 0, & \mbox{otherwise.}\\
 \end{cases}
$$

\begin{lemma}\label{chi} The following assertions hold.

{\rm (1)} $\chi_{a}^c=\chi_{b}^c\chi_{a}^b$ whenever $a\leq b\leq c$.

{\rm (2)} $\chi_{a}^{c\,*}=\chi_{a}^{b\,*}\chi_{b}^{c\,*}$ whenever
$a\leq b\leq c$.

{\rm (3)} $\chi_a^{b\,*}\chi_a^b=I_{H_a}\otimes I_a$, where
$I_{H_a}\otimes I_a:\mathcal{H}\rightarrow H_a\otimes l^2(S_a)$ is
a projection {\rm(}a~surjection\/{\rm)}.

{\rm (4)} $\chi_a^{b}\chi_a^{b\,*}=P_{H_b}\otimes I_b$, where
$P_{H_b}\otimes I_b:\mathcal{H}\hookrightarrow H_b\otimes l^2(S_b)$
is
a projection  {\rm(}an~injection\/{\rm)}.
\end{lemma}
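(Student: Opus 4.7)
The plan is to verify all four identities by direct computation on the generating vectors $h\otimes e_p$ of $\mathcal{H}$, where $h$ belongs to some $H_d$ and $p\in S_d$, and then to extend by linearity and continuity. The two inputs that do all the work are the compositional identity $\gamma_{ca}=\gamma_{cb}\circ\gamma_{ba}$ on the isometric embeddings and the equivalences (\ref{sim1})--(\ref{sim4}) on $\overline{S}$.

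For (1), on a generator $h\otimes e_p$ I would check both $\chi_a^c$ and $\chi_b^c\chi_a^b$. If $h\in H_a$ and $\partial_0 p=a$, the left-hand side produces $\gamma_{ca}(h)\otimes e_{[\overline{(c,a)}*\overline{p}]}$, while applying $\chi_a^b$ first yields $\gamma_{ba}(h)\otimes e_{[\overline{(b,a)}*\overline{p}]}$ and a subsequent $\chi_b^c$ (whose hypotheses hold because $\gamma_{ba}(h)\in H_b$ and the new starting point is $b$) gives $\gamma_{cb}\gamma_{ba}(h)\otimes e_{[\overline{(c,b)}*\overline{(b,a)}*\overline{p}]}$. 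Equality of the two sides is then immediate from $\gamma_{cb}\gamma_{ba}=\gamma_{ca}$ and relation (\ref{sim2}). If $h\notin H_a$ or $\partial_0 p\neq a$, both compositions collapse to zero by the very piecewise definitions. Claim (2) can be proved the same way using (\ref{sim1}) in place of (\ref{sim2}), or more economically by taking adjoints of (1): $(\chi_b^c\chi_a^b)^{*}=\chi_a^{b\,*}\chi_b^{c\,*}$, which by (1) equals $\chi_a^{c\,*}$.

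For (3) and (4) the computation is essentially one line each. In (3), on $h\otimes e_p$ with $h\in H_a$ and $\partial_0 p=a$ we get $\chi_a^b(h\otimes e_p)=\gamma_{ba}(h)\otimes e_{[\overline{(b,a)}*\overline{p}]}$, and then $\chi_a^{b\,*}$ returns $h\otimes e_{[(a,b)*\overline{(b,a)}*\overline{p}]}$, which simplifies to $h\otimes e_p$ by relation (\ref{sim3}). On the orthogonal complement of $H_a\otimes l^2(S_a)$ inside $\mathcal{H}$, the first operator $\chi_a^b$ already kills the vector. Symmetrically, in (4) one uses (\ref{sim4}) together with the fact that $\gamma_{ba}$ sends the basis of $H_a$ bijectively onto the basis of $H_b$ (hence is a surjective isometry), so that the preimage condition in the definition of $\chi_a^{b\,*}$ is satisfied precisely for $h\in H_b$.

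I do not foresee any substantive obstacle; the exercise is really bookkeeping. The only points that require a moment of care are tracking when the case-by-case definitions return zero, and reading $I_{H_a}\otimes I_a$ and $P_{H_b}\otimes I_b$ in the stated formulas as orthogonal projections of the full direct sum $\mathcal{H}=\bigoplus_{d\in K}H_d\otimes l^2(S_d)$ onto the summands $H_a\otimes l^2(S_a)$ and $H_b\otimes l^2(S_b)$ respectively; once this is done, (3) and (4) together automatically recast $\chi_a^b$ as a partial isometry with the indicated source and range projections.
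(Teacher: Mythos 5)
Your computations for (1)--(3) are correct and are essentially the paper's own proof: direct verification on the generators $h\otimes e_p$, with (1) resting on $\gamma_{ca}=\gamma_{cb}\circ\gamma_{ba}$ and (\ref{sim2}), and (3) on (\ref{sim3}); your adjoint shortcut for (2) is a legitimate alternative to the paper's direct check via (\ref{sim1}).

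There is, however, a genuine error in your treatment of (4): you assert that $\gamma_{ba}$ sends the basis of $H_a$ \emph{bijectively} onto the basis of $H_b$ and is therefore a surjective isometry, so that the preimage condition in the definition of $\chi_a^{b\,*}$ holds for every $h\in H_b$. That is not the standing hypothesis of the paper: $\gamma_{ba}$ is only an isometric embedding carrying basis vectors to basis vectors, and surjectivity is an \emph{extra} assumption invoked separately elsewhere (e.g.\ in Lemma~\ref{eqv}(3) and in the remark that the $\alpha_{ba}$ become isomorphisms when all $\gamma_{ba}$ are). If the $\gamma_{ba}$ were all onto, the paper's discussion of proper domains $H_{\overline{q}}\subsetneq H_a$ and of $\chi_{\overline{p}}\neq\chi_{\overline{q}}$ for equivalent sequences would be vacuous. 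Under the actual hypotheses, the preimage condition holds precisely for $h\in\gamma_{ba}(H_a)$, and the computation with (\ref{sim4}) identifies $\chi_a^b\chi_a^{b\,*}$ not with the identity on the summand $H_b\otimes l^2(S_b)$ but with $P_{H_b}\otimes I_b$, where $P_{H_b}$ is the orthogonal projection of $H_b$ onto $\gamma_{ba}(H_a)$; its range is only \emph{contained} in $H_b\otimes l^2(S_b)$. This is exactly the distinction the lemma encodes by calling (3) a surjection onto $H_a\otimes l^2(S_a)$ and (4) merely an injection into $H_b\otimes l^2(S_b)$. The repair is a one-line change: replace your surjectivity claim by the observation that $\chi_a^{b\,*}(h\otimes e_p)\neq 0$ exactly when $h\in\gamma_{ba}(H_a)$ and $\partial_0p=b$, and conclude that $\chi_a^b\chi_a^{b\,*}$ is the projection onto $\gamma_{ba}(H_a)\otimes l^2(S_b)$.
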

\begin{proof} (1) It follows from the equality $\gamma_{ca}=\gamma_{cb}\circ\gamma_{ba}$ and the relation
$\overline{(c,a)}\ast\overline{p}\sim\overline{(c,b)}\ast\overline{(b,a)}\ast\overline{p}$.

(2) Assume that the condition $\chi_{a}^{c\,*}(h\otimes e_p)=h''\otimes
e_{[(a,c)\ast\overline{p}]}\neq 0$ holds. Then we have
$h=\gamma_{ca}(h'')=\gamma_{cb}\circ\gamma_{ba}(h'')=\gamma_{cb}(h')$.
Consequently, we obtain the equalities
$$
\chi_{a}^{b\,*}\chi_{b}^{c\,*}(h\otimes
e_p)=\chi_{a}^{b\,*}(h'\otimes
e_{[(b,c)\ast\overline{p}]})=h''\otimes
e_{[(a,b)\ast(b,c)\ast\overline{p}]}=h''\otimes
e_{[(a,c)\ast\overline{p}]}.
$$

(3) For every element $h\otimes e_p\in H_a\otimes l^2(S_a)$ one has the equalities
$\chi_a^{b\,*}\chi_a^b(h\otimes
e_p)=\chi_a^{b\,*}(\gamma_{ba}(h)\otimes
e_{[\overline{(b,a)}\ast\overline{p}]})=h\otimes
e_{[(a,b)\ast\overline{(b,a)}\ast\overline{p}]}=h\otimes e_p$.

(4) Take an element $h\otimes e_p\in H_b\otimes l^2(S_b)$. If
 the property $\chi_a^{b}\chi_a^{b\,*}(h\otimes e_p)\neq 0$ is fulfilled then one can easily see that the equality $\chi_a^{b}\chi_a^{b\,*}(h\otimes e_p)=h\otimes e_p$ is valid. Hence, the inclusion
$\chi_a^{b}\chi_a^{b\,*}(H_b\otimes l^2(S_b))\subseteq H_b\otimes
l^2(S_b)$ holds.
\end{proof}

The set of isometries  $\{\chi_a^b\}_{a,b\in K,a\leq b}$ can be enlarged
to the set $\{\chi_{\overline{p}}\}_{\overline{p}\in
\overline{S}}$ as follows. Take an arbitrary sequence of elementary paths
$$
\overline{p}=(a_{2n},a_{2n-1})^{l_{2n-1}}\ast \ldots\ast (a_3,a_2)^{l_2}\ast
(a_2,a_1)^{l_1},
$$
where $l_k=0,1$ and $(a_{k+1},a_{k})^0=(a_{k+1},a_{k}), \
(a_{k+1},a_{k})^1=\overline{(a_{k+1},a_{k})}, \ k=1,...,2n-1$. Then we have
$$
\chi_{\overline{p}}=(\chi_{a_{2n}}^{a_{2n-1}})^{l_{2_n-1}}\ldots(\chi_{a_2}^{a_3})^{l_2}(\chi_{a_2}^{a_1})^{l_1},
$$
where $(\chi_{a_{k+1}}^{a_{k}})^0=\chi_{a_{k+1}}^{a_{k}*}, \
(\chi_{a_{k+1}}^{a_{k}})^1=\chi_{a_{k+1}}^{a_{k}}, \ k=1,...,2n-1$.

We note that the equality $\chi_{\overline{p}}^*=\chi_{\overline{p}^{-1}}$ holds.

The set $\{\chi_{\overline{p}}\}_{\overline{p}\in \overline{S}}$
is closed with respect to the multiplication operation:\break
$\chi_{\overline{p}}\chi_{\overline{q}}=\chi_{\overline{p}\ast\overline{q}}$
provided that the condition  $\partial_1\overline{p}=\partial_0\overline{q}$ holds.

The set
$$
H_{\overline{p}}=\{h\in H_{\partial_1\overline{p}}\,\,|\,\,\chi_{\overline{p}}(h\otimes e_q)\neq 0
\mbox{ if $\partial_0q=\partial_1\overline{p}$}\}
$$
is called \emph{the domain} of the sequence $\overline{p}$.

Obviously, the set $H_{\overline{p}}$ is a Hilbert space. It is worth noting that, in general, we have the condition $H_{\overline{p}}\neq H_{\overline{q}}$ for two distinct sequences $\overline{p},\overline{q}\in
\overline{S}$ , which are equivalent, i.e., $\overline{p}\sim\overline{q}$. For instance, $H_{\overline{p}}=H_a$ if
$\overline{p}=(a,b)\ast\overline{(b,a)}$ with $a\leq b$,
and $H_{\overline{q}}\subseteq H_a$ for
$\overline{q}=\overline{(a,c)}\ast(c,a)$ with $c\leq a$. Here,
the equivalences $\overline{p}\sim
i_a\sim\overline{q}$ hold.

Thus, in general, one has the property $\chi_{\overline{p}}\neq\chi_{\overline{q}}$ for  sequences satisfying the conditions $\overline{p}\sim\overline{q}, \
\overline{p}\neq\overline{q}$.

\begin{lemma}\label{eqv} The following assertions hold{\rm:}

{\rm (1)} the operator $\chi_{i_a}$ is the identity mapping on $H_a\otimes l^2(S_a);$

{\rm (2)} if $\overline{p}\sim \overline{q}$ then
$\chi_{\overline{p}}(h\otimes e_s)=\chi_{\overline{q}}(h\otimes
e_s)$ for every $h\in H_{\overline{p}}\cap H_{\overline{q}}$ and $s$
such that
$\partial_0s=\partial_1\overline{p}=\partial_1\overline{q};$

{\rm (3)} if $\overline{p}\sim \overline{q}$  and
$\gamma_{ba}:H_a\rightarrow H_b$ is an isomorphism for all $a\leq
b\in K$, then the equality $\chi_{\overline{p}}=\chi_{\overline{q}}$ holds.
\end{lemma}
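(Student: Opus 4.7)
The plan is to reduce each of the three assertions to a verification on the four generating equivalences (\ref{sim1})--(\ref{sim4}) by exploiting the multiplicativity $\chi_{\overline{p}\ast\overline{q}}=\chi_{\overline{p}}\chi_{\overline{q}}$ stated just before the lemma, together with Lemma \ref{chi}. For (1), I observe $i_a=(a,a)=\overline{(a,a)}$ and $\gamma_{aa}=I_{H_a}$, so $\chi_{i_a}(h\otimes e_p)=h\otimes e_{[\overline{(a,a)}\ast\overline{p}]}=h\otimes e_p$ on $H_a\otimes l^2(S_a)$, using the derived relation $i_a\ast\overline{p}\sim\overline{p}$ recorded in the list following (\ref{sim1})--(\ref{sim4}).

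For (2), I first check the statement on each generating relation. Relations (\ref{sim1}) and (\ref{sim2}) follow from parts (2) and (1) of Lemma \ref{chi}: $\chi_{(a,b)\ast(b,c)}=\chi_a^{b\,*}\chi_b^{c\,*}=\chi_a^{c\,*}=\chi_{(a,c)}$ and dually $\chi_{\overline{(c,b)}\ast\overline{(b,a)}}=\chi_b^c\chi_a^b=\chi_a^c=\chi_{\overline{(c,a)}}$, giving literal operator equality. Relation (\ref{sim3}) is handled by Lemma \ref{chi}(3): $\chi_{(a,b)\ast\overline{(b,a)}}=\chi_a^{b\,*}\chi_a^b=I_{H_a}\otimes I_a=\chi_{i_a}$ by part (1). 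Only relation (\ref{sim4}) produces genuine asymmetry: Lemma \ref{chi}(4) gives $\chi_{\overline{(b,a)}\ast(a,b)}=P_{H_b}\otimes I_b$, while $\chi_{i_b}=I_{H_b}\otimes I_b$; these two agree exactly on $\gamma_{ba}(H_a)\otimes l^2(S_b)$, which I identify with $(H_{\overline{(b,a)}\ast(a,b)}\cap H_{i_b})\otimes l^2(S_b)$ by a direct inspection of the definition of the domain, matching the hypothesis $h\in H_{\overline{p}}\cap H_{\overline{q}}$ of the lemma.

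The general case of (2) follows by induction on the number of generating equivalences in a chain transforming $\overline{p}$ into $\overline{q}$. A single step replaces a subsequence $\overline{u}$ inside $\overline{p}=\overline{r}_1\ast\overline{u}\ast\overline{r}_2$ by an equivalent $\overline{v}$, yielding $\overline{q}=\overline{r}_1\ast\overline{v}\ast\overline{r}_2$. Applying multiplicativity, $\chi_{\overline{p}}=\chi_{\overline{r}_1}\chi_{\overline{u}}\chi_{\overline{r}_2}$ and similarly for $\overline{q}$; the base-case analysis shows that $\chi_{\overline{u}}$ and $\chi_{\overline{v}}$ coincide on $(H_{\overline{u}}\cap H_{\overline{v}})\otimes l^2(S_{\partial_1\overline{u}})$, and the assumption $h\in H_{\overline{p}}\cap H_{\overline{q}}$ is precisely what guarantees that the intermediate vector $\chi_{\overline{r}_2}(h\otimes e_s)$ lands in this common subspace. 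Tracking the domain $H_{\overline{\cdot}}$ through the composition is the main technical point, and is the only real obstacle in the proof.

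For (3), the hypothesis that every $\gamma_{ba}$ is an isomorphism forces $\gamma_{ba}(H_a)=H_b$, hence $P_{H_b}=I_{H_b}$; the projection in Lemma \ref{chi}(4) becomes the identity, and relation (\ref{sim4}) collapses to a literal operator equality as in the other three cases. All four generating relations therefore yield $\chi_{\overline{u}}=\chi_{\overline{v}}$ on the entire space, and composition with $\chi_{\overline{r}_1}$, $\chi_{\overline{r}_2}$ propagates this to $\chi_{\overline{p}}=\chi_{\overline{q}}$ as operators on $\mathcal{H}$, by the same induction used for (2) but now unburdened by any domain bookkeeping.
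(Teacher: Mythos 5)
Your proof is correct and follows essentially the same route as the paper's: part (1) by direct evaluation, part (2) by reducing to the four generating equivalences (\ref{sim1})--(\ref{sim4}) via items (1)--(4) of Lemma~\ref{chi}, and part (3) by observing that when every $\gamma_{ba}$ is an isomorphism the only non-exact relation, (\ref{sim4}), becomes a literal operator identity. You are in fact somewhat more explicit than the paper's own proof about replacing a subword inside a longer sequence and tracking the domains $H_{\overline{p}}$ through the composition; the remaining hand-wave (passing from a single elementary move to an arbitrary chain of moves, where the test vector could leave intermediate domains) is exactly the reduction the paper itself makes without comment.
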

\begin{proof} (1) It is obvious.

(2) It is enough to prove the assertion for equivalences
(\ref{sim1})--(\ref{sim4}).

\noindent To this end, we assume that $a\leq b\leq c$. Then the equivalence $(a,b)\ast
(b,c)\sim (a,c)$ holds. Take an element $h\otimes e_s$ such that
$$
\chi_a^{b*}\chi_b^{c*}(h\otimes e_s)\neq 0\quad\mbox{ and }\quad\chi_a^{c*}(h\otimes
e_s)\neq 0.
$$
It follows from item~2 of Lemma~\ref{chi} that one has the equality
$$
\chi_a^{b*}\chi_b^{c*}(h\otimes e_s)=\chi_a^{c*}(h\otimes e_s).
$$
Item~1 of Lemma~\ref{chi} implies the assertion for the equivalence $\overline{(c,b)}\ast \overline{(b,a)}\sim
\overline{(c,a)}$.
Similarly, for equivalences (\ref{sim3}) and (\ref{sim4})
the assertion follows from items 3 and 4 of Lemma~\ref{chi}.

(3) It follows from (2) that equivalent deformations of the sequence $\overline{p}$ do not change values at points of the space
 $H_{\partial_1\overline{p}}$. Those  deformations only restrict or extend the domain $H_{\overline{p}}$ of the sequence. Furthermore, if
 $\gamma_{ba}:H_a\rightarrow H_b$ is an isomorphism whenever
$a\leq b\in K$, then we have the equalities
$H_{\overline{p}}=H_{\overline{q}}=H_{\partial_1\overline{p}}$.
Consequently, one gets the equality $\chi_{\overline{p}}=\chi_{\overline{q}}$, as required.
\end{proof}

\begin{theorem} The mapping $\pi:\overline{S}\rightarrow
B(\mathcal{H})$ given by
$\pi(\overline{p})=\chi_{\overline{p}}$ is a representation of
$(\overline{S}, \ast)$ in the algebra of bounded operators $B(\mathcal{H})$. If each
embedding $\gamma_{ba}:H_a\rightarrow H_b$
is an isomorphism for $a\leq b\in K$, then the mapping $\pi^*$ defined by $\pi^*([\overline{p}])=\pi(\overline{p})$ is a representation of the groupoid $\overline{S}/_{\sim}$.
\end{theorem}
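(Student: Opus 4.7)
The plan is to handle the two claims separately, with most of the real work already packaged in Lemmas~\ref{chi} and~\ref{eqv} and in the remark made immediately before the theorem. For the first claim it suffices to verify two things: (i) each $\chi_{\overline{p}}$ is a bounded operator on $\mathcal{H}$, and (ii) the multiplicativity $\pi(\overline{p}\ast\overline{q})=\pi(\overline{p})\pi(\overline{q})$ holds whenever the concatenation is defined (i.e.\ $\partial_1\overline{p}=\partial_0\overline{q}$). Item (i) follows by induction on the length of $\overline{p}$: by parts (3) and (4) of Lemma~\ref{chi} each $\chi_a^b$ is a partial isometry and hence has norm at most one, so any finite composition of such operators and their adjoints is bounded. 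Item (ii) is exactly the identity remarked upon just before the theorem, and it is immediate from the very definition of $\chi_{\overline{p}}$ as the ordered composition of the elementary operators associated with the elementary paths $s_i$: concatenating the two sequences concatenates the two compositions.

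For the second claim, the substantive point is the well-definedness of $\pi^*$ on the quotient $\overline{S}/_\sim$. This is precisely the content of Lemma~\ref{eqv}(3): under the assumption that each $\gamma_{ba}$ is an isomorphism, one has $\chi_{\overline{p}}=\chi_{\overline{q}}$ whenever $\overline{p}\sim\overline{q}$, so the value assigned to a class $[\overline{p}]$ does not depend on the chosen representative. Once this is secured, multiplicativity on the groupoid $\overline{S}/_\sim$ follows at once: for composable classes $p=[\overline{p}]$, $q=[\overline{q}]$ one has $p\ast q=[\overline{p}\ast\overline{q}]$, and the first part gives $\pi^*(p\ast q)=\chi_{\overline{p}\ast\overline{q}}=\chi_{\overline{p}}\chi_{\overline{q}}=\pi^*(p)\pi^*(q)$. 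Moreover, by Lemma~\ref{eqv}(1), the identity $[i_a]$ of the isotropy group at $a$ is sent to the identity operator on $H_a\otimes l^2(S_a)$, which is the unit of the corresponding corner, as one should expect from a groupoid representation.

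The only genuine obstacle is appreciating \emph{why} the isomorphism hypothesis is indispensable in the second part. The obstruction to dropping it is exhibited by the example stated immediately before Lemma~\ref{eqv}: for $\overline{p}=(a,b)\ast\overline{(b,a)}$ and $\overline{q}=\overline{(a,c)}\ast(c,a)$, both equivalent to $i_a$, one has $H_{\overline{p}}=H_a$ but possibly $H_{\overline{q}}\subsetneq H_a$, so that $\chi_{\overline{p}}$ and $\chi_{\overline{q}}$ can differ as operators on $\mathcal{H}$. Requiring each $\gamma_{ba}$ to be an isomorphism collapses all such domains to the full space $H_{\partial_1\overline{p}}$ and is exactly what makes Lemma~\ref{eqv}(3), and hence the passage to $\overline{S}/_\sim$, go through.
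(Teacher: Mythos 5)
Your proposal is correct and follows essentially the same route as the paper: multiplicativity of $\pi$ is read off from the identity $\chi_{\overline{p}}\chi_{\overline{q}}=\chi_{\overline{p}\ast\overline{q}}$ noted just before the theorem, and well-definedness of $\pi^*$ on $\overline{S}/_{\sim}$ is exactly Lemma~\ref{eqv}(3) under the isomorphism hypothesis. The extra remarks you add (boundedness via partial isometries, the unit $[i_a]$ going to the identity on $H_a\otimes l^2(S_a)$, and the counterexample explaining why the hypothesis is needed) are sound elaborations of what the paper leaves implicit.
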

\begin{proof} Take
$\overline{p},\overline{q}\in\overline{S}$ such that
$\partial_1\overline{p}=\partial_0\overline{q}$. Then we have the equalities
$$
\pi(\overline{p}\ast\overline{q})=\chi_{\overline{p}\ast\overline{q}}=
\chi_{\overline{p}}\chi_{\overline{q}}=\pi(\overline{p})\pi(\overline{q}).
$$
Assume that all  $\gamma_{ba}:H_a\rightarrow H_b$
are isomorphisms for all $a\leq b\in K$. Then, by Lemma~\ref{eqv}, we
have the equality $\chi_{\overline{p}}=\chi_{\overline{q}}$ for $\overline{p}\sim \overline{q}$.  Hence, the mapping $\pi^*$ in the statement of the theorem
is well-defined. Moreover, it is a representation of the groupoid~$\overline{S}/_{\sim}$.
\end{proof}

In the sequel, if a sequence  $\overline{p}$ is a loop, then the mapping $\chi_{\overline{p}}$ is called \emph{a cycle}. We note that the equalities
$$
\chi_{\overline{p}}\chi_{\overline{p}}^*\chi_{\overline{p}}=\chi_{\overline{p}}\,,
\quad
\chi_{\overline{p}}^*\chi_{\overline{p}}\chi_{\overline{p}}^*=\chi_{\overline{p}}^*
$$
hold for every $\overline{p}\in\overline{G}_a$. Therefore the set of cycles
 $\{\chi_{\overline{p}}\}_{\overline{p}\in\overline{G}_a}$
is a regular semigroup. It is clear that the element $\chi_{\overline{p}}^*$ is unique for each
cycle $\chi_{\overline{p}}$. As a consequence, the semigroup of cycles is inverse.

If the equivalence $\overline{p}\sim i_a$ holds for some $a\in K$, then the cycle
$\chi_{\overline{p}}$ is said to be \emph{trivial}.

\begin{theorem}\label{tr} Every trivial cycle
$\chi_{\overline{p}}$ is a projection of the form
$$
\chi_{\overline{p}}=Q_{\overline{p}}\otimes I_a,
$$
where  $\overline{p}\in\overline{G}_a$ and
$Q_{\overline{p}}$ is a projection on the domain
$H_{\overline{p}}$.
\end{theorem}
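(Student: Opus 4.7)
The plan is to show that $\chi_{\overline{p}}$ is supported on $H_a\otimes l^2(S_a)$, acts as the identity on $H_{\overline{p}}\otimes l^2(S_a)$ inside it, and vanishes on the orthogonal complement; this is exactly the assertion $\chi_{\overline{p}}=Q_{\overline{p}}\otimes I_a$, and the projection claim is then immediate since $Q_{\overline{p}}$ is a projection on $H_{\overline{p}}$.

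First I would handle the support. Each elementary factor $\chi_{s_k}$ in the product defining $\chi_{\overline{p}}$ is a partial isometry that kills any basis vector $h'\otimes e_q$ whose path index has the wrong starting point, i.e.\ whenever $\partial_0 q\neq\partial_1 s_k$; applied to the rightmost factor $\chi_{s_1}$, whose input starting point is $\partial_1\overline{p}=a$, this forces $\chi_{\overline{p}}(h\otimes e_q)=0$ whenever $h\notin H_a$ or $\partial_0 q\neq a$. Second, for $h\in H_{\overline{p}}$ and $q\in S_a$ I would apply Lemma~\ref{eqv}(2) with $\overline{q}=i_a$, which is legitimate since $\overline{p}\sim i_a$ and $H_{i_a}=H_a$ give $H_{\overline{p}}\cap H_{i_a}=H_{\overline{p}}$; together with Lemma~\ref{eqv}(1) this yields
$$
\chi_{\overline{p}}(h\otimes e_q)=\chi_{i_a}(h\otimes e_q)=h\otimes e_q,
$$
so $\chi_{\overline{p}}$ restricts to the identity on $H_{\overline{p}}\otimes l^2(S_a)$.

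The third step, showing that $\chi_{\overline{p}}$ annihilates $(H_a\ominus H_{\overline{p}})\otimes l^2(S_a)$, is where I expect to do real work. I would unfold $\chi_{\overline{p}}=\chi_{s_n}\cdots\chi_{s_1}$ and observe that each adjoint factor $\chi_{a_{k+1}}^{a_k\,*}$ restricts the $H$\nobreakdash-component of its input to the closed subspace $\gamma_{a_k,a_{k+1}}(H_{a_{k+1}})$, while the direct factors $\chi_{a_{k+1}}^{a_k}$ impose no $H$\nobreakdash-side constraint. Pulling these constraints back through the chain identifies $H_{\overline{p}}$ with a finite intersection of closed subspaces of $H_a$, hence a genuine closed subspace on whose orthogonal complement $\chi_{\overline{p}}$ must vanish.

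Putting the three steps together, for $h\in H_a$ and $q\in S_a$ the decomposition $h=Q_{\overline{p}}h+(h-Q_{\overline{p}}h)$ gives $\chi_{\overline{p}}(h\otimes e_q)=Q_{\overline{p}}(h)\otimes e_q=(Q_{\overline{p}}\otimes I_a)(h\otimes e_q)$, and combining this with the support statement and extending by linearity and continuity yields $\chi_{\overline{p}}=Q_{\overline{p}}\otimes I_a$. The main obstacle is this third step: in the paper $H_{\overline{p}}$ is introduced merely as the set of vectors on which $\chi_{\overline{p}}$ is nonvanishing, so one must upgrade that description to the initial subspace interpretation above in order to conclude genuine vanishing on $H_{\overline{p}}^{\perp}\cap H_a$ rather than just failure of the defining nonvanishing condition.
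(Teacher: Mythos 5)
Your proposal is correct and follows essentially the paper's own route: the paper's proof of this theorem is exactly your first two steps, i.e.\ Lemma~\ref{eqv}(1),(2) applied with $\overline{q}=i_a$ (so that $\chi_{\overline{p}}$ agrees with the identity on $H_{\overline{p}}\otimes l^2(S_a)$), with the vanishing off $H_{\overline{p}}$ and off $H_a\otimes l^2(S_a)$ left implicit. Your third step merely makes that implicit part explicit; the only point worth adding is that the conclusion ``$\chi_{\overline{p}}$ vanishes on the orthogonal complement of the pulled-back constraint subspace'' relies on the standing hypothesis that each $\gamma_{ba}$ carries the basis $\{e_n^a\}$ into the basis $\{e_n^b\}$ (so every adjoint factor kills precisely the basis vectors outside the relevant range, and the complement of $H_{\overline{p}}$ in $H_a$ is spanned by killed basis vectors); for general isometric embeddings that implication would fail and the composite need not even be a partial isometry.
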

\begin{proof} By Lemma~\ref{eqv}, since
$\overline{p}\sim i_a$ for an element $a\in K$ the cycle $\chi_{\overline{p}}$ is a projection.
\end{proof}

\begin{corollary}\label{cikl} For every loop $\overline{p}$
the equality
$\chi_{\overline{p}}^*\chi_{\overline{p}}=Q_{\overline{p}}\otimes
I_a$ holds, where $Q_{\overline{p}}$ is a projection on the domain
$H_{\overline{p}^{-1}\ast \overline{p}}=H_{\overline{p}}$.
\end{corollary}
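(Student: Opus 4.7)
The plan is to derive this directly from Theorem \ref{tr} after rewriting $\chi_{\overline{p}}^{*}\chi_{\overline{p}}$ as a cycle associated with a trivial loop. The key identifications to make are that for a loop $\overline{p}\in\overline{G}_a$ the concatenation $\overline{p}^{-1}\ast\overline{p}$ makes sense in $\overline{S}$ (both endpoints equal $a$), that this concatenation is equivalent to $i_a$ under the relations (\ref{sim1})--(\ref{sim4}) by property~1 of $\overline{S}$, and that the operator $\chi$ is multiplicative: $\chi_{\overline{p}^{-1}\ast\overline{p}}=\chi_{\overline{p}^{-1}}\chi_{\overline{p}}=\chi_{\overline{p}}^{*}\chi_{\overline{p}}$ by the formula $\chi_{\overline{p}}^{*}=\chi_{\overline{p}^{-1}}$ stated just after Lemma \ref{chi}.

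With these pieces in place, the first part of the statement is immediate: since $\overline{p}^{-1}\ast\overline{p}\sim i_a$, the cycle $\chi_{\overline{p}^{-1}\ast\overline{p}}$ is trivial, and Theorem \ref{tr} provides the factorization $\chi_{\overline{p}^{-1}\ast\overline{p}}=Q_{\overline{p}}\otimes I_a$, where $Q_{\overline{p}}$ is the orthogonal projection on the domain $H_{\overline{p}^{-1}\ast\overline{p}}$.

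It remains to verify the domain equality $H_{\overline{p}^{-1}\ast\overline{p}}=H_{\overline{p}}$. I would argue this by unpacking the definition of the domain: $h\in H_{\overline{p}^{-1}\ast\overline{p}}$ means $\chi_{\overline{p}^{-1}}\chi_{\overline{p}}(h\otimes e_q)\neq 0$ for some (equivalently every, by the elementary-path formulas) $q$ with $\partial_0 q=a$. Since $\chi_{\overline{p}^{-1}}=\chi_{\overline{p}}^{*}$ vanishes on the orthogonal complement of the range of $\chi_{\overline{p}}$, nonvanishing forces $\chi_{\overline{p}}(h\otimes e_q)\neq 0$, i.e., $h\in H_{\overline{p}}$. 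Conversely, if $h\in H_{\overline{p}}$, then the chain of partial isometries making up $\chi_{\overline{p}}$ sends $h\otimes e_q$ to a nonzero vector, and applying $\chi_{\overline{p}}^{*}$ returns $h\otimes e_q$ itself because $\chi_{\overline{p}}^{*}\chi_{\overline{p}}$ is a projection (it equals the already-established $Q_{\overline{p}}\otimes I_a$), so the result is nonzero and $h\in H_{\overline{p}^{-1}\ast\overline{p}}$.

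The only subtle point I anticipate is the second inclusion, because one must be careful that the partial isometries $\chi_{a_k}^{a_{k+1}}$ built along the sequence $\overline{p}$ do not successively shrink the allowable $h$ beyond $H_{\overline{p}}$ and then reject it when the reverse path is applied. This is handled by the same argument used in items (3)--(4) of Lemma \ref{chi}, iterated along the sequence: each step $\chi_{a_k}^{a_{k+1}*}\chi_{a_k}^{a_{k+1}}$ acts as the identity on the image of $\chi_{a_k}^{a_{k+1}}$. Once that is in hand, the corollary follows by the direct substitution into Theorem~\ref{tr}.
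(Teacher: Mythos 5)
Your proposal is correct and follows essentially the same route as the paper: rewrite $\chi_{\overline{p}}^{*}\chi_{\overline{p}}=\chi_{\overline{p}^{-1}\ast\overline{p}}$, note $\overline{p}^{-1}\ast\overline{p}\sim i_a$, and apply Theorem~\ref{tr}. Your additional verification of $H_{\overline{p}^{-1}\ast\overline{p}}=H_{\overline{p}}$ (including the step-by-step use of Lemma~\ref{chi}(3)--(4) to avoid circularity in the converse inclusion) is a sound elaboration of a point the paper's one-line proof leaves implicit.
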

\begin{proof} We note that the equality
$\chi_{\overline{p}}^*\chi_{\overline{p}}=\chi_{\overline{p}^{-1}\ast\overline{p}}$
and the equivalence $\overline{p}^{-1}\ast\overline{p}\sim i_a$ hold for an element $a\in
K$.
\end{proof}

\begin{corollary}\label{pr} If $\overline{p},\overline{q}\in\overline{G}_a$ and
$\overline{p}\sim\overline{q}$, then the operator
$\chi_{\overline{p}}^*\chi_{\overline{q}}=P_{H_{a}}\otimes I_a$
is a projection.
\end{corollary}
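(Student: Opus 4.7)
The plan is to reduce the computation of $\chi_{\overline{p}}^*\chi_{\overline{q}}$ to a single cycle associated with a loop that is equivalent to $i_a$, and then invoke Theorem~\ref{tr}. First I would use the identity $\chi_{\overline{p}}^*=\chi_{\overline{p}^{-1}}$ noted immediately after the definition of $\chi_{\overline{p}}$, together with the multiplicativity $\chi_{\overline{p}^{-1}}\chi_{\overline{q}}=\chi_{\overline{p}^{-1}\ast\overline{q}}$, which is available because $\partial_1\overline{p}^{-1}=\partial_0\overline{p}=a=\partial_0\overline{q}$ (both sequences being loops based at $a$). This gives the single identity
$$
\chi_{\overline{p}}^*\chi_{\overline{q}}=\chi_{\overline{p}^{-1}\ast\overline{q}}.
$$

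Next I would argue that $\overline{p}^{-1}\ast\overline{q}\sim i_a$. Since $\overline{p}\sim\overline{q}$, and the rewriting rules (\ref{sim1})--(\ref{sim4}) generating $\sim$ are local and therefore respect left concatenation, we have $\overline{p}^{-1}\ast\overline{q}\sim\overline{p}^{-1}\ast\overline{p}$, and property~1 of $\overline{S}$ then yields $\overline{p}^{-1}\ast\overline{p}\sim i_a$. Thus $\chi_{\overline{p}^{-1}\ast\overline{q}}$ is a trivial cycle, and Theorem~\ref{tr} immediately identifies it with $Q_{\overline{p}^{-1}\ast\overline{q}}\otimes I_a$, where $Q_{\overline{p}^{-1}\ast\overline{q}}$ is the orthogonal projection of $H_a$ onto the domain $H_{\overline{p}^{-1}\ast\overline{q}}$. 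Setting $P_{H_a}:=Q_{\overline{p}^{-1}\ast\overline{q}}$ produces the claimed formula and the fact that it is a projection.

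The one point I would be careful to emphasize, rather than a genuine obstacle, is that $P_{H_a}$ need not be the identity of $H_a$: as the example $\overline{q}=\overline{(a,c)}\ast(c,a)$ with $c\le a$ discussed before Lemma~\ref{eqv} shows, the domain of a trivial loop can be a proper subspace of $H_a$. Consequently $\chi_{\overline{p}}^*\chi_{\overline{q}}$ is in general only a subprojection of $I_{H_a}\otimes I_a$, and the notation $P_{H_a}\otimes I_a$ must be read in that sense. No further computation is required, because both key ingredients (multiplicativity of $\chi$ and the structure of trivial cycles) are already supplied by the preceding material.
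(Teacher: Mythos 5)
Your proof is correct and follows essentially the same route as the paper: the paper's one-line argument consists precisely of the chain $\overline{p}^{-1}\ast\overline{q}\sim\overline{p}^{-1}\ast\overline{p}\sim i_a$, with the identification $\chi_{\overline{p}}^*\chi_{\overline{q}}=\chi_{\overline{p}^{-1}\ast\overline{q}}$ and the appeal to Theorem~\ref{tr} left implicit. Your closing remark that $P_{H_a}\otimes I_a$ may be a proper subprojection (the domain $H_{\overline{p}^{-1}\ast\overline{q}}$ need not be all of $H_a$) is a correct reading of the paper's notation and adds a useful clarification.
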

\begin{proof} It is sufficient to note that the following equivalences hold:
\begin{equation*}
\overline{p}^{-1}\ast\overline{q}\sim\overline{p}^{-1}\ast\overline{p}\sim
i_a.\qedhere
\end{equation*}
\end{proof}

\begin{corollary} For all trivial cycles $\chi_{\overline{p}}$ and $\chi_{\overline{q}}$ one has the equality
$$
\chi_{\overline{p}}\chi_{\overline{q}}=\chi_{\overline{q}}\chi_{\overline{p}}\,.
$$
\end{corollary}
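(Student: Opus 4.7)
The plan is to split into two cases according to the base points of the loops and to exploit self-adjointness of projections.

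First I would handle the trivial case where $\overline{p}\in\overline{G}_a$ and $\overline{q}\in\overline{G}_b$ with $a\neq b$. Because each $\chi_{s}$ built from the elementary partial isometries $\chi_c^d$ and their adjoints sends the summand $H_c\otimes l^2(S_c)$ into $H_d\otimes l^2(S_d)$ and vanishes on all other summands, a composition along a loop at $b$ is supported on, and ranges in, $H_b\otimes l^2(S_b)$; applying $\chi_{\overline{p}}$ (supported on $H_a\otimes l^2(S_a)$) to that range gives $0$. Hence $\chi_{\overline{p}}\chi_{\overline{q}}=0=\chi_{\overline{q}}\chi_{\overline{p}}$, so they commute trivially. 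Alternatively one can just invoke Theorem~\ref{tr}, which presents each trivial cycle as $Q_{\overline{p}}\otimes I_a$ on $H_a\otimes l^2(S_a)$ and zero elsewhere, and observe orthogonality of the underlying summands.

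The substantive case is $\overline{p},\overline{q}\in\overline{G}_a$ for a common base point $a\in K$. Here $\partial_1\overline{p}=a=\partial_0\overline{q}$, so the concatenation $\overline{p}\ast\overline{q}\in\overline{G}_a$ is defined and
$$
\chi_{\overline{p}}\chi_{\overline{q}}=\chi_{\overline{p}\ast\overline{q}}.
$$
Since $\overline{p}\sim i_a$ and $\overline{q}\sim i_a$, I would use the relation $i_a\ast i_a\sim i_a$ (listed right after (\ref{sim1})--(\ref{sim4})) to conclude $\overline{p}\ast\overline{q}\sim i_a$, i.e.\ $\chi_{\overline{p}\ast\overline{q}}$ is again a trivial cycle. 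By Theorem~\ref{tr} it is therefore a (self-adjoint) projection.

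Taking adjoints and using that $\chi_{\overline{p}},\chi_{\overline{q}}$ are themselves projections, I would finish with
$$
\chi_{\overline{p}}\chi_{\overline{q}}=\chi_{\overline{p}\ast\overline{q}}=\bigl(\chi_{\overline{p}\ast\overline{q}}\bigr)^{*}=\chi_{\overline{q}}^{*}\chi_{\overline{p}}^{*}=\chi_{\overline{q}}\chi_{\overline{p}},
$$
which is the desired identity. The only delicate point is the case distinction on base points; once one notices that cycles based at different points have mutually orthogonal supports, the rest is a one-line consequence of Theorem~\ref{tr}. No $C^*$-algebraic machinery beyond self-adjointness of projections is needed.
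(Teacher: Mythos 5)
Your argument is correct, and it is worth noting that the paper itself states this corollary without proof, leaving it as an immediate consequence of Theorem~\ref{tr}: both trivial cycles at a common base point $a$ have the form $Q_{\overline{p}}\otimes I_a$ and $Q_{\overline{q}}\otimes I_a$, where the projections $Q_{\overline{p}},Q_{\overline{q}}$ act on subspaces of $H_a$ spanned by subsets of the fixed basis $\{e_n^a\}$ (because each $\gamma_{ba}$ carries basis to basis), hence commute. Your route is genuinely different and arguably cleaner as a deduction from what is already stated: you use closure of the family $\{\chi_{\overline{s}}\}$ under concatenation to write $\chi_{\overline{p}}\chi_{\overline{q}}=\chi_{\overline{p}\ast\overline{q}}$, observe that $\overline{p}\ast\overline{q}\sim i_a\ast i_a\sim i_a$ (the relation $\sim$ is a congruence, as the paper needs anyway for the product on $\overline{S}/_{\sim}$ to be well defined), so the product is again a trivial cycle and hence, by Theorem~\ref{tr}, a self-adjoint projection; taking adjoints then forces $\chi_{\overline{p}}\chi_{\overline{q}}=\chi_{\overline{q}}\chi_{\overline{p}}$. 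This avoids any appeal to the basis-diagonal structure of the projections $Q_{\overline{p}}$, at the cost of invoking the congruence property and the multiplicativity $\chi_{\overline{p}}\chi_{\overline{q}}=\chi_{\overline{p}\ast\overline{q}}$, both of which the paper provides. Your preliminary case of distinct base points (where both products vanish by orthogonality of the summands $H_a\otimes l^2(S_a)$ and $H_b\otimes l^2(S_b)$) is a sensible addition that the paper glosses over. No gaps.
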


\begin{theorem}\label{loop} If  $K$ is an upward directed set then every cycle $\chi_{\overline{p}}$
is a projection of the form $\chi_{\overline{p}}=P_{H_{a}}\otimes I_a$, where $\overline{p}\in\overline{G}_a$ and
$P_{H_a}$ is a projection on the domain $H_{\overline{p}}$.
\end{theorem}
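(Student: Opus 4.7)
The plan is to reduce the statement to Theorem~\ref{tr} (the trivial-cycle case) by invoking the structural result, cited earlier from \cite{GLS}, that $G_a$ is trivial whenever $K$ is upward directed. Once every loop is known to be equivalent to the constant loop $i_a$, every cycle is automatically a trivial cycle, and Theorem~\ref{tr} hands us the desired projection form.

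Concretely, I would proceed as follows. First, fix $a \in K$ and take an arbitrary loop $\overline{p} \in \overline{G}_a$. Since $K$ is upward directed, the remark preceding Section~3 says that the subgroup $G_a \subseteq S$ is trivial, i.e., it consists only of the class $[i_a]$. Hence $[\overline{p}] = [i_a]$ in $\overline{S}/_\sim$, which by definition means $\overline{p} \sim i_a$. Second, apply Theorem~\ref{tr}: trivial cycles are projections, and the cycle is of the form $Q_{\overline{p}} \otimes I_a$, where $Q_{\overline{p}}$ is the projection onto the domain $H_{\overline{p}}$. Matching notation with the statement, I set $P_{H_a} := Q_{\overline{p}}$, which is by construction a projection onto the subspace $H_{\overline{p}} \subseteq H_a$.

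There is essentially no obstacle here: the whole argument is a two-line reduction, because the hard work — proving triviality of $G_a$ for upward directed $K$ and the structure theorem for trivial cycles — has already been done, respectively in \cite{GLS} and in Theorem~\ref{tr} above. The only minor point worth checking is the compatibility of notation: the statement writes $P_{H_a}$ (suggesting a projection living on $H_a$), whereas Theorem~\ref{tr} produces $Q_{\overline{p}}$ as the projection onto $H_{\overline{p}}$; since $\overline{p}$ is a loop based at $a$, we have $H_{\overline{p}} \subseteq H_{\partial_1 \overline{p}} = H_a$, so $P_{H_a}$ is indeed an orthogonal projection on $H_a$ whose range is $H_{\overline{p}}$, matching the claim.
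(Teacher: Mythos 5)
Your proposal is correct and follows essentially the same route as the paper: both reduce to the result from \cite{GLS} that upward directedness forces every loop to be equivalent to $i_a$ (equivalently, $G_a$ is trivial), so every cycle is a trivial cycle, and then apply Theorem~\ref{tr} to get the projection form. The notational check that $P_{H_a}$ is the projection on $H_{\overline{p}}\subseteq H_a$ is a harmless extra clarification.
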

\begin{proof} In \cite{GLS}, it is shown that if $K$ is an upward directed set then
for each loop  $\overline{p}$ one has the equivalence $\overline{p}\sim i_a$
for some $a\in K$. This means that every cycle
$\chi_{\overline{p}}$ is trivial. Applying Theorem~\ref{tr}, we obtain the assertion of the theorem.
\end{proof}

A cycle $\chi_{\overline{p}}$ is said to be \emph{finite} if the domain
$H_{\overline{p}}$ is a finite-dimensional linear space.

A cycle $\chi_{\overline{p}}$ is said to be \emph{nilpotent} if
there exists a natural number $m$ such that the equality $\chi_{\overline{p}}^m=0$ holds.

\section{$C^*$-algebras generated by cycles}

In what follows we suppose that the set $K$ is not upward directed.

In general case, for $\overline{p}\in
\overline{G}_a$ every cycle $\chi_{\overline{p}}$ has the form
$\chi_{\overline{p}}=U_{\overline{p}}\otimes T_{\overline{p}}$, where
$U_{\overline{p}}:H_a\rightarrow H_a$ is a partial isometry   and $T_{\overline{p}}:l^2(S_a)\rightarrow l^2(S_a)$ is a unitary operator
corresponding to the loop $\overline{p}$ such that
$T_{\overline{p}}e_q=e_{[\overline{p}\ast\overline{q}]}$, where
$\overline{q}\in q$. By Theorem~\ref{tr}, if a cycle $\chi_{\overline{p}}$ is trivial then we may
write the equality
$\chi_{\overline{p}}=Q_{\overline{p}}\otimes I_a$, where
$Q_{\overline{p}}$ is a projection.

Assume that we are given two equivalent loops $\overline{p}\sim \overline{q}$. Then one has
the equality $T_{\overline{p}}=T_{\overline{q}}$, but, in general, we have
$U_{\overline{p}}\neq U_{\overline{q}}$.  Corollary~\ref{cikl} implies
the equalities
$\chi_{\overline{p}}^*\chi_{\overline{p}}=Q_{\overline{p}}\otimes
I_a$ and
$\chi_{\overline{q}}^*\chi_{\overline{q}}=Q_{\overline{q}}\otimes
I_a$. For loops $\overline{p}\sim \overline{q}$ we define the order relation on cycles: $\chi_{\overline{p}}\leq\chi_{\overline{q}}$
if  $Q_{\overline{p}}\leq Q_{\overline{q}}$. It is easy to verify that one has the relations
\begin{equation}\label{neq}
\chi_{\overline{p}}^*\chi_{\overline{q}}\leq \chi_{\overline{q}}^*\chi_{\overline{q}}\,; \quad
\chi_{\overline{p}}^*\chi_{\overline{q}}\leq
\chi_{\overline{p}}^*\chi_{\overline{p}}\,.
\end{equation}

Indeed, to prove the first relation we rewrite it in the form
 $\chi_{\overline{p}^{-1}\ast\overline{q}}\leq
\chi_{\overline{q}^{-1}\ast\overline{q}} $ and note that
$Q_{\overline{p}^{-1}\ast\overline{q}}\leq
Q_{\overline{q}}=Q_{\overline{q}^{-1}\ast\overline{q}}$. To prove the latter
we make use of Corollary~\ref{pr}. This statement guarantees that
the operator $\chi_{\overline{p}}^*\chi_{\overline{q}}$ is a projection. Hence, we obtain
$\chi_{\overline{p}}^*\chi_{\overline{q}}=\chi_{\overline{q}}^*\chi_{\overline{p}}\leq
\chi_{\overline{p}}^*\chi_{\overline{p}}$, as desired.

For loops $\overline{p}\sim \overline{q}$ we define the addition operation
 $\chi_{\overline{p}}\vee\chi_{\overline{q}}$ of cycles as follows:

1) if the operator $Q_{\overline{p}}+Q_{\overline{q}}$ is also projection, i.e.,
$Q_{\overline{p}}Q_{\overline{q}}=0$, then we set
$\chi_{\overline{p}}\vee\chi_{\overline{q}}=\chi_{\overline{p}}+\chi_{\overline{q}}$;

2) if the condition $Q_{\overline{p}}Q_{\overline{q}}=Q\neq 0$ holds then we put
$\chi_{\overline{p}}\vee\chi_{\overline{q}}=\chi_{\overline{p}}+\chi_{\overline{q}}((Q_{\overline{q}}
- Q)\otimes I_a)$.

\begin{lemma}\label{path} Let $\overline{p}\sim \overline{q}$ be loops with base point
$a$. Then the addition of cycles
$\chi_{\overline{p}}\vee\chi_{\overline{q}}$ can be represented in the form
$\chi_{\overline{p}}\vee\chi_{\overline{q}}=U_{\overline{p},\overline{q}}\otimes
T_{\overline{p}}=U_{\overline{p},\overline{q}}\otimes
T_{\overline{q}}$, where $U_{\overline{p},\overline{q}}:H_a\rightarrow
H_a$ is a partial isometry.
\end{lemma}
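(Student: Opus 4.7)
My plan is to reduce the lemma to showing that a certain operator $V$ on $H_a$ is a partial isometry, by factoring out the common unitary part. Since $\overline{p} \sim \overline{q}$ are loops with base point $a$, the unitary operators agree: $T_{\overline{p}} = T_{\overline{q}}$. Writing out the definition of $\chi_{\overline{p}} \vee \chi_{\overline{q}}$ in each of the two cases and distributing over the tensor product, one obtains $\chi_{\overline{p}} \vee \chi_{\overline{q}} = V \otimes T_{\overline{p}}$ with $V = U_{\overline{p}} + U_{\overline{q}}$ in case (1) and $V = U_{\overline{p}} + U_{\overline{q}}(Q_{\overline{q}} - Q)$ in case (2). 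The lemma then reduces to showing that this $V$ is a partial isometry, equivalently that $V^{*} V$ is a projection in $B(H_a)$.

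The decisive intermediate step is to establish the identity $U_{\overline{p}}^{*} U_{\overline{q}} = Q_{\overline{p}} Q_{\overline{q}} = Q$. I would combine three ingredients. First, by Corollary~\ref{pr}, the operator $\chi_{\overline{p}}^{*} \chi_{\overline{q}} = (U_{\overline{p}}^{*} U_{\overline{q}}) \otimes I_a$ is a projection, so $U_{\overline{p}}^{*} U_{\overline{q}}$ is a projection in $B(H_a)$. Second, by the inequalities (\ref{neq}) together with the fact that trivial cycles commute, this projection is dominated by both $Q_{\overline{p}}$ and $Q_{\overline{q}}$, hence by their product $Q$. Third, by Lemma~\ref{eqv}(2) the partial isometries $U_{\overline{p}}$ and $U_{\overline{q}}$ agree on the common domain $H_{\overline{p}} \cap H_{\overline{q}} = Q H_a$, so $U_{\overline{p}} Q = U_{\overline{q}} Q$; applying $U_{\overline{p}}^{*}$ on the left gives $U_{\overline{p}}^{*} U_{\overline{q}} Q = Q_{\overline{p}} Q = Q$. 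A sub-projection $P \leq Q$ satisfying $P Q = Q$ must equal $Q$, yielding the identity.

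With this identity in hand, the computation of $V^{*} V$ is routine projection algebra. In case (1), where $Q = 0$, the cross terms $U_{\overline{p}}^{*} U_{\overline{q}}$ and $U_{\overline{q}}^{*} U_{\overline{p}}$ vanish and $V^{*} V = Q_{\overline{p}} + Q_{\overline{q}}$, which is a projection because $Q_{\overline{p}} Q_{\overline{q}} = 0$. In case (2), the cross terms carry an extra factor $(Q_{\overline{q}} - Q)$, and since $U_{\overline{p}}^{*} U_{\overline{q}}(Q_{\overline{q}} - Q) = Q(Q_{\overline{q}} - Q) = 0$, they again vanish, leaving $V^{*} V = Q_{\overline{p}} + (Q_{\overline{q}} - Q)$, a projection because $Q_{\overline{p}}$ and $Q_{\overline{q}} - Q$ are orthogonal (their product is $Q_{\overline{p}} Q_{\overline{q}} - Q = 0$).

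The main obstacle is the second step: pinning down $U_{\overline{p}}^{*} U_{\overline{q}}$ exactly as the product of the two initial projections. This is where the hypothesis $\overline{p} \sim \overline{q}$ is used beyond the surface-level identification $T_{\overline{p}} = T_{\overline{q}}$; everything else is direct bookkeeping with projections.
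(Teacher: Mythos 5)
Your proof is correct, and it follows the same overall reduction as the paper — in both cases you factor out the common unitary $T_{\overline{p}}=T_{\overline{q}}$ and must show that the remaining operator $V$ on $H_a$ has $V^*V$ equal to a projection — but the way you establish this is genuinely different and sharper. The paper simply asserts in case (1) that $U_{\overline{p}}+U_{\overline{q}}$ is a partial isometry, and in case (2) it estimates $(\chi_{\overline{p}}\vee\chi_{\overline{q}})^*(\chi_{\overline{p}}\vee\chi_{\overline{q}})$ from above using the relations (\ref{neq}), writing a chain that literally yields only an inequality "$\leq Q_{\overline{p}}\otimes I_a+(Q_{\overline{q}}-Q)\otimes I_a$" before declaring equality; the cross terms are never computed. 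Your intermediate identity $U_{\overline{p}}^*U_{\overline{q}}=Q_{\overline{p}}Q_{\overline{q}}=Q$ — obtained by combining Corollary~\ref{pr} (the product is a projection), the domination relations (\ref{neq}) together with commutativity of the trivial cycles (so the projection lies under $Q_{\overline{p}}Q_{\overline{q}}$), and Lemma~\ref{eqv}(2) (so $U_{\overline{p}}Q=U_{\overline{q}}Q$, whence $U_{\overline{p}}^*U_{\overline{q}}Q=Q_{\overline{p}}Q=Q$ and the two projections coincide) — is exactly what is needed to make the cross terms vanish, turning the paper's upper bound into the exact equality $V^*V=Q_{\overline{p}}+(Q_{\overline{q}}-Q)$, and it also covers case (1) (where $Q=0$ forces $U_{\overline{p}}^*U_{\overline{q}}=0$), thereby justifying the paper's unproved assertion there. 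In short, your route buys an exact computation where the paper's argument only records a domination, at the modest cost of the extra lemma-level bookkeeping needed to pin down $U_{\overline{p}}^*U_{\overline{q}}$; all the facts you invoke (Corollary~\ref{cikl} for $U_{\overline{p}}^*U_{\overline{p}}=Q_{\overline{p}}$, commutativity of trivial cycles so that $Q$ is the projection onto $H_{\overline{p}}\cap H_{\overline{q}}$) are available in the paper, so the argument is complete.
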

\begin{proof} First, we assume that $Q_{\overline{p}}Q_{\overline{q}}=0$. Since
 $T_{\overline{p}}=T_{\overline{q}}$ we get the equalities
$\chi_{\overline{p}}\vee\chi_{\overline{q}}=\chi_{\overline{p}}+\chi_{\overline{q}}=
(U_{\overline{p}}+U_{\overline{q}})\otimes T_{\overline{p}}$, where
$U_{\overline{p}}+U_{\overline{q}}$ is a partial isometry.

Second, we assume that $Q_{\overline{p}}Q_{\overline{q}}=Q\neq 0$.
To prove the lemma it is enough to show that
$(\chi_{\overline{p}}^*\vee\chi_{\overline{q}}^*)(\chi_{\overline{p}}\vee\chi_{\overline{q}})=\widehat{Q}\otimes
I_a$, where $\widehat{Q}$ is a projection. Indeed, using relations
 (\ref{neq}), we have the following:
\begin{equation*}
\begin{split}
(\chi_{\overline{p}}\vee\chi_{\overline{q}})^*(\chi_{\overline{p}}\vee\chi_{\overline{q}})
&=
(\chi_{\overline{p}}^*+((Q_{\overline{q}}- Q)\otimes
I_a)\chi_{\overline{q}}^*)(\chi_{\overline{p}}
+\chi_{\overline{q}}((Q_{\overline{q}}-
Q)\otimes I_a))
\\
&\leq Q_{\overline{p}}\otimes I_a+((Q_{\overline{q}}- Q)\otimes
I_a)(Q_{\overline{p}}\otimes I_a)
\\
&\qquad+(Q_{\overline{p}}\otimes
I_a)((Q_{\overline{q}}- Q)\otimes I_a)
\\
&\qquad+((Q_{\overline{q}}- Q)\otimes I_a)(Q_{\overline{q}}\otimes
I_a)((Q_{\overline{q}}- Q)\otimes I_a)
\\
&=Q_{\overline{p}}\otimes
I_a+(Q_{\overline{q}}- Q)\otimes I_a=\widehat{Q}\otimes I_a.\qedhere
\end{split}
\end{equation*}
\end{proof}

Further, let  $E$ be an infinite subset in an equivalence class
$[\overline{p}]$. We denote by $K(E)$ the family of all finite subsets of the set $E$.  For every $A\in K(E)$ we define the operator
$$
\chi_A=\bigvee\limits_{\overline{q}\in A}\chi_{\overline{q}}.
$$
It follows from Lemma~\ref{path} that $\chi_A$ is a partial isometry  satisfying the property
$$
\chi_A^*\chi_A=Q_A\otimes I_a,
$$
where $Q_A$
is a projection on the space
$$
H_E=\bigcup\limits_{\overline{p}\in
E}H_{\overline{p}}.
$$
As well as it was done for cycles one can define the order relation for all $A,B\in K(E)$ as follows:
$$
\chi_A\leq\chi_B,\quad\mbox{if} Q_A\leq Q_B,
$$
which is equivalent to the inclusion $A\subseteq
B$.

Let $\chi_E$ be the limit with respect to the net $K(E)$ under the inclusion in the strong operator topology. In particular, if
$E=[\overline{p}]$ then we get the operator
$\chi_{[\overline{p}]}=\chi_p$. In the sequel we shall write
$$
\chi_p=\bigvee\limits_{\overline{p}\in p}\chi_{\overline{p}},
$$
where the sum is taken over the whole equivalence class. We shall call this operator \emph{the $p$-cycle}.

In the similar way as it was done for cycles, one can define a finite and a nilpotent
 $p$-cycles. In what follows, we suppose that every $p$-cycle $\chi_p$ is neither finite nor nilpotent.  Although particular cycles $\chi_{\overline{p}}$ may be finite or nilpotent.

\begin{lemma}\label{pcikl} The following assertions hold{\rm:}

{\rm (1)} if  $\overline{p}\sim \overline{q}$ then
$\chi_{[\overline{p}]}=\chi_{[\overline{q}]};$

{\rm (2)} for every $p\in G_a$ the equalities
$\chi_p\chi_p^*\chi_p=\chi_p$ and $\chi_p^*\chi_p\chi_p^*=\chi_p^*$ hold;

{\rm (3)} for every $p,q\in G_a$ the relation
$\chi_p\chi_q\leq\chi_{p\ast q}$ holds.
\end{lemma}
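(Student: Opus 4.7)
Part~(1) is a tautology: the operator $\chi_{[\overline{p}]}$ is defined purely in terms of the equivalence class $[\overline{p}]$ viewed as a set of sequences of elementary paths (the net $K([\overline{p}])$ of its finite subsets), so $\overline{p}\sim\overline{q}$ implies $[\overline{p}]=[\overline{q}]$ as sets, and therefore $\chi_{[\overline{p}]}=\chi_{[\overline{q}]}$ as SOT limits.

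For Part~(2), the plan is to promote the tensor decomposition of Lemma~\ref{path} from finite approximants to the SOT limit. Inductively on the cardinality of $A\in K([\overline{p}])$, I would use Lemma~\ref{path} to write $\chi_A = U_A\otimes T_p$, where $U_A$ is a partial isometry on $H_a$ with initial projection $Q_A$ (the second tensor factor is the same $T_p$ for every representative because $T_{\overline{p}}=T_{\overline{q}}$ when $\overline{p}\sim\overline{q}$). A short case analysis of the two cases in the definition of~$\vee$ shows that for $A\subseteq B$ the partial isometry $U_B$ agrees with $U_A$ on $Q_AH_a$. Hence the net $\{U_Ah\}_A$ stabilises on $\bigcup_AQ_AH_a$, extends by continuity to a partial isometry $U_p$ with initial projection $Q_p=\sup_AQ_A$, and yields $\chi_p = U_p\otimes T_p$. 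The two identities $\chi_p\chi_p^*\chi_p=\chi_p$ and $\chi_p^*\chi_p\chi_p^*=\chi_p^*$ are then the standard defining relations of a partial isometry, applied to $U_p\otimes T_p$.

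For Part~(3), using the decomposition from (2) and the identity $T_pT_q e_s = e_{p*q*s} = T_{p*q}e_s$ (so $T_pT_q=T_{p*q}$), I would compute
\[
\chi_p\chi_q = (U_pU_q)\otimes T_{p*q}, \qquad \chi_{p*q}=U_{p*q}\otimes T_{p*q}.
\]
Since the $T$-factors coincide, the asserted order relation reduces to the operator inequality $U_q^*Q_pU_q\leq Q_{p*q}$ on $H_a$. On the level of single representatives the identity $\chi_{\overline{p}}\chi_{\overline{q}}=\chi_{\overline{p}*\overline{q}}$ (an elementary check against the definition of the $\chi_{\overline{r}}$) gives $U_{\overline{q}}^*Q_{\overline{p}}U_{\overline{q}}=Q_{\overline{p}*\overline{q}}$, and since $\overline{p}*\overline{q}\in p*q$ this is bounded by $Q_{p*q}$. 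I would then pass from finite products $\chi_A\chi_B$ to $\chi_p\chi_q$ via SOT-continuity of one-sided multiplication by a fixed bounded operator, dominating each $\chi_A\chi_B$ by $\chi_C$ with $C=\{\overline{p}*\overline{q}:\overline{p}\in A,\overline{q}\in B\}\subseteq p*q$, so that the bound $\chi_C\leq\chi_{p*q}$ is preserved in the limit.

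The main obstacle is in Part~(3): because $\vee$ is genuinely not additive when initial projections overlap, the product of two $\vee$-sums does not literally distribute into a $\vee$-sum of products, and so the comparison $\chi_A\chi_B\leq\chi_C$ cannot be read off term by term. The delicate step is therefore to justify the double SOT passage that transports the representative inequality $U_{\overline{q}}^*Q_{\overline{p}}U_{\overline{q}}\leq Q_{p*q}$ into $U_q^*Q_pU_q\leq Q_{p*q}$; this is where I expect the bulk of the technical work to sit.
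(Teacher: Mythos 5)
Your route is essentially the paper's own: part (1) is immediate from the definition; part (2) in the paper is obtained even more directly, by noting that $\chi_p^*\chi_p=Q_p\otimes I_a$ with $Q_p$ the projection onto $H_p=\bigcup_{\overline p\in p}H_{\overline p}$ (your more explicit assembly of $\chi_p=U_p\otimes T_p$ from the compatible partial isometries $U_A$ of Lemma~\ref{path} is sound, just longer); and part (3) is proved in the paper precisely by the identity you single out as problematic, namely $\chi_p\chi_q=\bigvee_{\overline p\in p,\,\overline q\in q}\chi_{\overline p\ast\overline q}=\chi_E$ with $E=\{\overline p\ast\overline q \mid \overline p\in p,\ \overline q\in q\}\subseteq p\ast q$, asserted there without further comment. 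So your worry pinpoints exactly the step the paper glosses over; what is missing from your proposal is the (short) argument that closes it.

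The step is not a genuine obstacle, and no delicate double SOT passage or term-by-term distribution of $\vee$ is needed, because everything in sight is diagonal with respect to the fixed bases: each $\gamma_{ba}$ carries the basis of $H_a$ into the basis of $H_b$, so every $\chi_{\overline r}$, $\chi_A$, $\chi_r$ sends a basis vector $h\otimes e_s$ (with $h$ a basis vector of $H_a$) to a basis vector or to $0$, and all initial projections $Q_{\overline r}$, $Q_A$, $Q_r$ are projections onto closed spans of basis vectors. In particular $H_q$ is the closed span of $\bigcup_{\overline q\in q}H_{\overline q}$, so every basis vector of $H_q$ lies in some single $H_{\overline q}$. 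Now if a basis vector $h\otimes e_s$ is not annihilated by $\chi_p\chi_q$, pick $\overline q\in q$ with $h\in H_{\overline q}$ and $\overline p\in p$ with $U_{\overline q}h\in H_{\overline p}$; then $\chi_p\chi_q(h\otimes e_s)=\chi_{\overline p}\chi_{\overline q}(h\otimes e_s)=\chi_{\overline p\ast\overline q}(h\otimes e_s)=\chi_{p\ast q}(h\otimes e_s)$ and $h\in H_{\overline p\ast\overline q}\subseteq H_{p\ast q}$. Taking spans, this gives at once $(\chi_p\chi_q)^*(\chi_p\chi_q)=(U_q^*Q_pU_q)\otimes I_a\le Q_{p\ast q}\otimes I_a$ and that $\chi_p\chi_q$ agrees with $\chi_{p\ast q}$ on its initial space, i.e.\ $\chi_p\chi_q=\chi_E$ with $E\subseteq p\ast q$ — which is needed anyway, since the order $\le$ of the paper only compares operators of the form $\chi_E$ over the same class, so the projection inequality alone is not quite the full statement. (If you insist on approximants: $\chi_A\chi_B\to\chi_p\chi_q$ strongly since the nets are contractions and one can pass to the limit one factor at a time, and an inequality of positive operators against the fixed $Q_{p\ast q}\otimes I_a$ survives weak limits; but the pointwise basis argument makes this detour unnecessary.)
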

\begin{proof} (1) It follows immediately from the definition of a $p$-cycle.

(2) To prove the first equality we note that the representation
$$
\chi_p^*\chi_p=Q_p\otimes I_a
$$
holds, where $Q_p$ is the projection on the space
$$
H_p=\bigcup\limits_{\overline{p}\in
p}H_{\overline{p}}.
$$
 The proof of the second equation is similar.

(3) It is sufficient to show that the equality
$\chi_p\chi_q=\chi_E$ holds for some $E\subseteq p\ast q$.
Indeed, we have the equalities
$$
\chi_p\chi_q=\bigvee\limits_{\overline{p}\in
p,\overline{q}\in
q}\chi_{\overline{p}}\chi_{\overline{q}}=\bigvee\limits_{\overline{p}\in
p,\overline{q}\in q}\chi_{\overline{p}\ast\overline{q}}.
$$
Then we get
$E=\{\overline{p}\ast\overline{q} \ | \ \overline{p}\in
p,\overline{q}\in q\}\subseteq \{\overline{s} \ | \ \overline{s}\in
p\ast q\}=p\ast q.$ This completes the proof.
\end{proof}

Further, we denote by $\mathfrak{A}_{a,e}$ the subalgebra in $B(\mathcal{H})$ generated by trivial
cycles $\chi_{\overline{p}}$ with $\overline{p}\sim i_a$, which is closed in the strong operator topology.
This algebra acts nontrivially only on the subspace $H_a\otimes
l^2(S_a)$. We notice that this algebra is commutative and contains, in particular, the operators
$\chi_{\overline{p}}^*\chi_{\overline{p}}$\,, $\chi_E^*\chi_E$ for
$E\subseteq [\overline{p}]$, $\chi_p^*\chi_p$ and etc.

Let us consider the family of subspaces
$$
\mathfrak{A}_{a,p}=\mathfrak{A}_{a,e}\chi_p\,, \quad p\in G_a.
$$
The subalgebra~$\mathfrak{A}_{a,e}$ corresponds to the unit $[i_a]$ of the group
$G_a$. We claim that $\mathfrak{A}_{a,p}$ is a Banach space.
Indeed, let us take a Cauchy sequence $\{A_n\chi_p\}_{n=1}^{\infty}$ in
$\mathfrak{A}_{a,p}$. Hence,
$\{A_n\chi_p\chi_p^*\}_{n=1}^{\infty}$ is a Cauchy sequence in~$\mathfrak{A}_{a,e}$ as well. Since $\mathfrak{A}_{a,e}$ is a Banach space the sequence $\{A_n\chi_p\chi_p^*\}$ converges to some element
$B\in\mathfrak{A}_{a,e}$. Then, by Lemma~\ref{pcikl}(2), we have
$A_n\chi_p=A_n\chi_p\chi_p^*\chi_p$ and the sequence $\{A_n\chi_p\}$ converges to the element
$B\chi_p\in \mathfrak{A}_{a,p}$, as claimed.

Let us denote by $\mathfrak{A}_a$ the subalgebra in $B(\mathcal{H})$ generated by elements from the family
$\mathfrak{A}_{a,p}$, $p\in G_a$, which is closed with respect to the uniform norm.

The main result of this paragraph is the proof of the assertion stating that the $C^*$-algebra $\mathfrak{A}_a$ is a
$\pi_1(K)$-graded algebra. For the definition of a $G$-graded
$C^*$-algebra, where $G$
is a group, we refer the reader to \cite[\S 19]{exel}.

\begin{lemma}\label{overp} For every $E\subseteq [\overline{p}]=p$
we have $\chi_E\in\mathfrak{A}_{a,p}$. In particular,
$\chi_{\overline{p}}\in \mathfrak{A}_{a,p}$ for each
$\overline{p}\in p$.
\end{lemma}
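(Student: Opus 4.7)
The plan is to exhibit $\chi_E$ in the form $A\chi_p$ with $A \in \mathfrak{A}_{a,e}$; the natural candidate is $A = \chi_E\chi_E^*$. Thus two things need verifying: the factorisation $\chi_E = (\chi_E\chi_E^*)\chi_p$, and the membership $\chi_E\chi_E^* \in \mathfrak{A}_{a,e}$. Once these are in hand, $\chi_E \in \mathfrak{A}_{a,e}\chi_p = \mathfrak{A}_{a,p}$, and specialising to $E = \{\overline{p}\}$ gives $\chi_{\overline{p}} \in \mathfrak{A}_{a,p}$.

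For the factorisation I would use the tensor decompositions $\chi_E = U_E \otimes T_p$ and $\chi_p = U_p \otimes T_p$ obtained by iterating Lemma~\ref{path} and passing to SOT-limits; crucially the same unitary $T_p$ appears, since its action on $l^2(S_a)$ depends only on the equivalence class $p$. The $\bigvee$-construction also yields the compatibility $U_p|_{H_E} = U_E|_{H_E}$, from which a direct calculation gives $\chi_E^*\chi_p = (U_E^*U_p)\otimes I_a = Q_E\otimes I_a = \chi_E^*\chi_E$. Multiplying by $\chi_E$ on the left and invoking Lemma~\ref{pcikl}(2) then delivers $\chi_E\chi_E^*\chi_p = \chi_E$ as required.

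For the membership, I would observe that for every $\overline{q} \in E$ the operator $\chi_{\overline{q}}\chi_{\overline{q}}^* = \chi_{\overline{q}\ast\overline{q}^{-1}}$ is a trivial cycle (as $\overline{q}\ast\overline{q}^{-1}\sim i_a$), hence a projection in the commutative algebra $\mathfrak{A}_{a,e}$. Since the range of $U_E$ is the closed linear span of the ranges of the $U_{\overline{q}}$, $\overline{q}\in E$, the final projection decomposes as $\chi_E\chi_E^* = \bigvee_{\overline{q}\in E}\chi_{\overline{q}}\chi_{\overline{q}}^*$, a supremum of pairwise commuting projections. For finite $E$ this sup is a polynomial expression in those projections and so lies in $\mathfrak{A}_{a,e}$; for infinite $E$ it is the SOT-limit of the finite sups along the net $K(E)$, which remains in $\mathfrak{A}_{a,e}$ by SOT-closedness.

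The main obstacle is the identity $\chi_E^*\chi_p = \chi_E^*\chi_E$, which morally says that the overlap of the partial isometries $\chi_E$ and $\chi_p$ is exactly the initial projection of $\chi_E$. It is not immediate from the $\bigvee$-definition and requires unpacking the tensor decomposition together with the compatibility $U_p|_{H_E} = U_E|_{H_E}$ inherited from Lemma~\ref{path}; everything else is routine application of Lemma~\ref{pcikl} and the SOT-closure of $\mathfrak{A}_{a,e}$.
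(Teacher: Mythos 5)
Your proposal is correct and follows essentially the same route as the paper: both exhibit $\chi_E$ as a trivial-cycle projection from $\mathfrak{A}_{a,e}$ multiplied by $\chi_p$, relying on the fact (inherited from the $\bigvee$-construction) that $\chi_p$ extends $\chi_E$ and $H_E\subseteq H_p$. The only difference is cosmetic: you take the final projection $\chi_E\chi_E^*$ as the left factor (proved via $\chi_E^*\chi_p=\chi_E^*\chi_E$), whereas the paper takes the initial projection $\chi_E^*\chi_E=Q_E\otimes I_a$ and writes $\chi_E=\chi_E^*\chi_E\,\chi_p$ directly.
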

\begin{proof} Assume that $E\subseteq p$. Then we have
$\chi_E^*\chi_E\in\mathfrak{A}_{a,e}$ as well as
$\chi_E^*\chi_E\chi_p\in\mathfrak{A}_{a,p}$. Further, one has the equality
$
\chi_E^*\chi_E=Q_E\otimes I_a,
$
where $Q_E$ is a projection on the space
$$
H_E=\bigcup\limits_{\overline{p}\in
E}H_{\overline{p}}\subseteq H_p.
$$
Consequently, we have the equality
$\chi_E^*\chi_E\chi_p=\chi_E$.
\end{proof}

\begin{lemma}\label{pq} For every $p,q\in G_a$ the inclusion $\chi_p\chi_q\in\mathfrak{A}_{a,p\ast
q}$ holds.
\end{lemma}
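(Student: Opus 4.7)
The proof is essentially a two-line consequence of what has already been set up, so the plan is to point at the right ingredients and chain them together.

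First, I would revisit the computation performed in the proof of Lemma~\ref{pcikl}(3). There it was observed that
\[
\chi_p\chi_q \;=\; \bigvee_{\overline{p}\in p,\ \overline{q}\in q}\chi_{\overline{p}}\chi_{\overline{q}} \;=\; \bigvee_{\overline{p}\in p,\ \overline{q}\in q}\chi_{\overline{p}\ast\overline{q}},
\]
which, by the definition of $\chi_E$ given just before Lemma~\ref{pcikl}, is precisely $\chi_E$ for
\[
E \;=\; \{\,\overline{p}\ast\overline{q} \mid \overline{p}\in p,\ \overline{q}\in q\,\}.
\]
Since $p$ and $q$ are both loops based at $a$, the composition $\overline{p}\ast\overline{q}$ is defined for every choice of representatives, and all such composites are equivalent sequences, so $E\subseteq [\overline{p}\ast\overline{q}]=p\ast q$.

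Second, I would invoke Lemma~\ref{overp} with the equivalence class $p\ast q$ in place of $p$: for any subset $E$ of the equivalence class $p\ast q$, one has $\chi_E\in\mathfrak{A}_{a,p\ast q}$. Combining this with the identification $\chi_p\chi_q=\chi_E$ from the first step yields the desired membership $\chi_p\chi_q\in\mathfrak{A}_{a,p\ast q}$.

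The one place where a reader might ask for more care is the interchange of the two ``joins'': the $p$-cycle $\chi_p$ (resp.\ $\chi_q$) is a strong-operator limit over finite subsets of $p$ (resp.\ of $q$), and we are using that the product $\chi_p\chi_q$ coincides with the join $\chi_E$ over finite subsets of $E$. This relies only on $\chi_{\overline{p}}\chi_{\overline{q}}=\chi_{\overline{p}\ast\overline{q}}$ (used in Lemma~\ref{pcikl}(3)) together with the fact that SOT-limits are preserved by composition with a bounded operator on one side. Nothing deeper than that is needed, so I do not anticipate any real obstacle beyond making the two references to Lemma~\ref{pcikl}(3) and Lemma~\ref{overp} explicit.
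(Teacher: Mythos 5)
Your proposal is correct and follows exactly the paper's own argument: the identity $\chi_p\chi_q=\chi_E$ with $E=\{\overline{p}\ast\overline{q}\mid\overline{p}\in p,\ \overline{q}\in q\}\subseteq p\ast q$ extracted from the proof of Lemma~\ref{pcikl}(3), followed by an application of Lemma~\ref{overp}. Your extra remark about the strong-operator-topology joins is a reasonable point of care but does not change the route.
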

\begin{proof} In the proof of Lemma~\ref{pcikl} one has already seen that the equality
$$
\chi_p\chi_q=\chi_E
$$
holds for some $E\subseteq p\ast q$.
Hence, by Lemma~\ref{overp}, we have the desired inclusion
\begin{equation*}
\chi_p\chi_q\in\mathfrak{A}_{a,p\ast q}.\qedhere
\end{equation*}
\end{proof}

We recall that  \emph{a conditional expectation} is a positive linear operator $\Phi$ from a $C^*$-algebra $\mathfrak{A}$ to its subalgebra
$\mathfrak{A}_0$ such that $\|\Phi\|=1$ and $\Phi(BAC)=B\Phi(A)C$
for all $B,C\in \mathfrak{A}_0$ and $A\in \mathfrak{A}$.

\begin{lemma}\label{phi} The mapping
$
\Phi:\mathfrak{A}_a\rightarrow\mathfrak{A}_{a,e}
$
given by
$\Phi(B)=B_0$, where
$$
B=
B_0+\sum\limits_{k}B_k\chi_{p_k}\in\bigoplus\limits_{p\in
G_a}\mathfrak{A}_{a,p},\quad B_0,B_k\in\mathfrak{A}_{a,e},\quad p_k\neq[i_a],
$$
is a conditional expectation.
\end{lemma}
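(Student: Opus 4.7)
The plan is to realize $\Phi$ as a single compression by a carefully chosen isometry, which makes every conditional-expectation axiom follow almost mechanically and, as a byproduct, verifies the well-definedness of the prescribed formula. The structural facts from the preamble to \S 4 that I would use are: every $B_0 \in \mathfrak{A}_{a,e}$ acts on $H_a \otimes l^2(S_a)$ as $M \otimes I_a$ for some $M \in B(H_a)$; each $p$-cycle has the form $\chi_p = U_p \otimes T_p$, where $T_p$ is the unitary shift on $l^2(S_a)$ with $T_p e_q = e_{p \ast q}$; and $\mathfrak{A}_a$ acts non-trivially only on the direct summand $H_a \otimes l^2(S_a)$ of $\mathcal{H}$.

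Concretely, I would introduce the isometry $V : H_a \to \mathcal{H}$ defined by $V h = h \otimes e_{[i_a]}$, and consider the map
\[
\widetilde{\Phi}(B) := V^* B V \otimes I_a,
\]
regarded as an operator on $H_a \otimes l^2(S_a)$ and extended by zero on the remaining summands of $\mathcal{H}$. For $B = M \otimes I_a \in \mathfrak{A}_{a,e}$ the intertwining $V^* B = M V^*$ gives $\widetilde{\Phi}(B) = M \otimes I_a = B$; for $B = (M \otimes I_a)\chi_p \in \mathfrak{A}_{a,p}$ with $p \neq [i_a]$, one has $B V h = M U_p h \otimes e_p$, which is orthogonal to $\mathrm{Range}\,V = H_a \otimes \mathbb{C} e_{[i_a]}$, so $\widetilde{\Phi}(B) = 0$. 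Consequently $\widetilde{\Phi}$ agrees with the prescribed assignment $B_0 + \sum_k B_k \chi_{p_k} \mapsto B_0$ on the algebraic sum $\bigoplus_{p \in G_a} \mathfrak{A}_{a,p}$, and this forces the formula to be unambiguous: if $\sum_p A_p \chi_p = 0$, then evaluating on $h \otimes e_{[i_a]}$ yields $\sum_p M_p U_p h \otimes e_p$ lying in the orthogonal direct sum $\bigoplus_p H_a \otimes \mathbb{C} e_p$, so each $M_p U_p = 0$ and hence $A_p \chi_p = 0$.

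Since $\|\widetilde{\Phi}(B)\| \leq \|V^* B V\| \leq \|B\|$, the map is contractive on the algebraic sum and therefore extends uniquely to a contractive linear map $\Phi : \mathfrak{A}_a \to B(\mathcal{H})$. Its image on the dense algebraic sum lies in $\mathfrak{A}_{a,e}$, and $\mathfrak{A}_{a,e}$ is uniformly closed, so $\Phi$ lands in $\mathfrak{A}_{a,e}$. The remaining conditional-expectation axioms drop out of the compression description: positivity follows from $\Phi(B^* B) = (B V)^*(B V) \otimes I_a \geq 0$; the norm equation from $\Phi|_{\mathfrak{A}_{a,e}} = \mathrm{id}$, which gives $\|\Phi\| \geq 1$ against the contractivity bound; and the bimodule identity because for $B = B' \otimes I_a$ and $C = C' \otimes I_a$ in $\mathfrak{A}_{a,e}$ the commutations $V^* B = B' V^*$ and $C V = V C'$ (both consequences of $I_a e_{[i_a]} = e_{[i_a]}$) give
\[
\Phi(B A C) = V^* B A C V \otimes I_a = B'(V^* A V) C' \otimes I_a = B\, \Phi(A)\, C
\]
on the algebraic sum, and then on all of $\mathfrak{A}_a$ by continuity.

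The only genuinely delicate point is the uniqueness of the decomposition $B_0 + \sum_k B_k \chi_{p_k}$, because without it the defining formula would not even make sense. As sketched above, this is settled by the orthogonality of the one-dimensional subspaces $\mathbb{C} e_p \subset l^2(S_a)$ for distinct $p \in G_a$, i.e., precisely by the presence of the $l^2(S_a)$-factor in the construction of $\mathcal{H}$; once this is in hand, writing $\Phi$ as a compression makes the rest routine.
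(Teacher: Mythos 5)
Your argument is correct, but it takes a genuinely different route from the paper's. The paper proves the bound $\|\Phi(B)\|\le\|B\|$ by choosing, for a given $\varepsilon>0$, a vector $h\otimes e_p$ on which $B_0$ almost attains its norm and noting that the terms $B_k\chi_{p_k}(h\otimes e_p)$ lie along the orthogonal basis directions $e_{p_k\ast p}$ with $p_k\ast p\neq p$; the module identity $\Phi(ABC)=A\Phi(B)C$ is then verified by a direct computation using $\chi_{p_k}C=C'\chi_{p_k}$. You instead realize $\Phi$ as the single compression $B\mapsto V^*BV\otimes I_a$ along the isometry $Vh=h\otimes e_{[i_a]}$, and the same orthogonality (now in the form $p_k\ast[i_a]=p_k\neq[i_a]$) shows that this compression annihilates every $\mathfrak{A}_{a,p}$ with $p\neq[i_a]$ and restricts to the identity on $\mathfrak{A}_{a,e}$. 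Both arguments rest on the same structural facts, which you invoke somewhat more explicitly than the paper and should justify or cite: every element of $\mathfrak{A}_{a,e}$ acts on $H_a\otimes l^2(S_a)$ as $M\otimes I_a$ (the generators are of the form $Q_{\overline{p}}\otimes I_a$ and the set of operators $M\otimes I_a$ is strongly closed), and $\chi_p=U_p\otimes T_p$ with $T_pe_q=e_{p\ast q}$ (the form given before Lemma~\ref{path}, preserved under the joins and the strong limit defining the $p$-cycle). What your route buys: the well-definedness of the decomposition $B=B_0+\sum_k B_k\chi_{p_k}$ and the positivity of $\Phi$ (indeed complete positivity) come for free from the compression picture, whereas the paper leaves both implicit (positivity there follows only indirectly, e.g.\ via Tomiyama's theorem on norm-one projections or by an argument like yours); what the paper's route buys is a more elementary norm estimate carried out entirely inside $\mathfrak{A}_a$ without introducing the auxiliary isometry. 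Either way the lemma is established.
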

\begin{proof} Firstly, let us show that for every $B$ the inequality
$\|\Phi(B)\|\leq\|B\|$ holds. It follows from the definition of the norm that for every $\varepsilon > 0$ there exists an element $h\otimes e_p\in H_a\otimes
l^2(S_a)$ with $\|h\otimes e_p\|=1$ such that we have the estimate
$$
\|B_0(h\otimes
e_p)\|\geq\|B_0\|-\varepsilon.
$$
Then we obtain the following:
\begin{align*}
\|B\|&\geq\|B(h\otimes e_p)\|=\Big\|B_0(h\otimes e_p)+\sum\limits_{k}B_k\chi_{p_k}(h\otimes e_p)\Big\|
\\
&=\Big\|h_0\otimes e_p+\sum\limits_{k_i}h_{k_i}\otimes e_{p_{k_i}\ast p}\|
\geq\|B_0(h\otimes e_p)\Big\|\geq\|B_0\|-\varepsilon.
\end{align*}
The validity of the above-mentioned inequalities follows from the inclusion $\{p_{k_i}\}\subseteq\{p_k\}$ which guarantees the condition
 $p_{k_i}\ast p\neq p$. Since $\varepsilon$ is arbitrary we get
the required estimate
$$
\|\Phi(B)\|=\|B_0\|\leq\|B\|.
$$

Secondly, we take elements
$$
B\in\bigoplus\limits_{p\in G_a}\mathfrak{A}_{a,p},\quad A,C\in
\mathfrak{A}_{a,e}.
$$
Then we have the equalities
\begin{equation*}
\begin{split}
\Phi(ABC)&=\Phi(AB_0C+\sum\limits_{k}AB_k\chi_{p_k}C)
\\
&=\Phi(B_0'+\sum\limits_{k}AB_kC'\chi_{p_k})=B_0'=AB_0C=A\Phi(B)C.\qedhere
\end{split}
\end{equation*}
\end{proof}

\begin{theorem} The $C^*$-algebra $\mathfrak{A}_a$ is  $\pi_1(K)$-graded, that is, the following
representation holds:
    $$
    \mathfrak{A}_a=\overline{\bigoplus\limits_{p\in G_a\cong\pi_1(K)}\mathfrak{A}_{a,p}}\,.
    $$
\end{theorem}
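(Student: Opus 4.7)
The plan is to verify the three conditions defining a $G$-graded $C^*$-algebra (with $G=G_a\cong\pi_1(K)$): the grading compatibilities $\mathfrak{A}_{a,p}\mathfrak{A}_{a,q}\subseteq\mathfrak{A}_{a,p\ast q}$ and $\mathfrak{A}_{a,p}^{*}=\mathfrak{A}_{a,p^{-1}}$; directness of the algebraic sum $\sum_{p\in G_a}\mathfrak{A}_{a,p}$; and density of this sum in $\mathfrak{A}_a$.

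The key technical step is the auxiliary inclusion $\chi_p\mathfrak{A}_{a,e}\chi_p^{*}\subseteq\mathfrak{A}_{a,e}$. For representatives $\overline{p},\overline{q}\in p$ and a generating trivial cycle $\chi_{\overline{r}}$ (with $\overline{r}\sim i_a$), the composite $\chi_{\overline{p}}\chi_{\overline{r}}\chi_{\overline{q}}^{*}=\chi_{\overline{p}\ast\overline{r}\ast\overline{q}^{-1}}$ is again trivial, since $\overline{p}\sim\overline{q}$ forces $\overline{p}\ast\overline{r}\ast\overline{q}^{-1}\sim\overline{p}\ast\overline{p}^{-1}\sim i_a$. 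Passing to joins over the class $p$ and to strong limits keeps us inside the strong-operator-closed commutative algebra $\mathfrak{A}_{a,e}$, proving the inclusion for arbitrary $A\in\mathfrak{A}_{a,e}$. Combined with $\chi_p=\chi_p\chi_p^{*}\chi_p$ from Lemma~\ref{pcikl}(2), Corollary~\ref{cikl} (which places $\chi_p^{*}\chi_p$ in $\mathfrak{A}_{a,e}$), and commutativity of $\mathfrak{A}_{a,e}$, this gives
\[
\chi_p A=\chi_p A(\chi_p^{*}\chi_p)=(\chi_p A\chi_p^{*})\chi_p\in\mathfrak{A}_{a,e}\chi_p=\mathfrak{A}_{a,p}.
\]
Hence $(A_1\chi_p)(A_2\chi_q)=A_1(\chi_p A_2\chi_p^{*})\chi_p\chi_q$; by Lemmas~\ref{pq} and~\ref{overp}, $\chi_p\chi_q\in\mathfrak{A}_{a,e}\chi_{p\ast q}$, so the whole product lies in $\mathfrak{A}_{a,p\ast q}$. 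The involutive grading $\mathfrak{A}_{a,p}^{*}=\mathfrak{A}_{a,p^{-1}}$ follows from $(A\chi_p)^{*}=\chi_{p^{-1}}A^{*}$ by the same manipulation with $p$ replaced by $p^{-1}$.

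Density is then immediate: by the multiplicative grading, $\sum_p\mathfrak{A}_{a,p}$ is a $*$-subalgebra of $\mathfrak{A}_a$ containing all its generators, so its uniform closure equals $\mathfrak{A}_a$. For directness, I invoke the conditional expectation $\Phi$ of Lemma~\ref{phi}. Assume $\sum_k B_k\chi_{p_k}=0$ for distinct $p_k\in G_a$ and $B_k\in\mathfrak{A}_{a,e}$. Right-multiplying by $\chi_{p_j}^{*}$ sends each cross term $B_k\chi_{p_k}\chi_{p_j}^{*}$ ($k\neq j$) into $\mathfrak{A}_{a,p_k\ast p_j^{-1}}$ with $p_k\ast p_j^{-1}\neq[i_a]$, so $\Phi$ annihilates these and we are left with $B_j\chi_{p_j}\chi_{p_j}^{*}=0$; right-multiplying by $\chi_{p_j}$ and using Lemma~\ref{pcikl}(2) yields $B_j\chi_{p_j}=0$.

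The principal obstacle is the conjugation inclusion $\chi_p\mathfrak{A}_{a,e}\chi_p^{*}\subseteq\mathfrak{A}_{a,e}$: one must justify that the joins and strong-operator limits defining $\chi_p$ commute correctly with left and right multiplication and preserve membership in $\mathfrak{A}_{a,e}$. Once this is in hand, the three grading conditions drop out of the semigroup bookkeeping on loops and the conditional expectation of Lemma~\ref{phi}.
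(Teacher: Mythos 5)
Your proposal is correct and takes essentially the same route as the paper's own proof: the commutation of $\mathfrak{A}_{a,e}$ with $\chi_p$ obtained from $\chi_p=\chi_p\chi_p^*\chi_p$ together with the commutativity of $\mathfrak{A}_{a,e}$, Lemma~\ref{pq} for the product rule, the adjoint identity for $(\mathfrak{A}_{a,p})^*=\mathfrak{A}_{a,p^{-1}}$, and the conditional expectation of Lemma~\ref{phi} for independence of the summands. The only real difference is that you make explicit the generator-level verification that $\chi_p\mathfrak{A}_{a,e}\chi_p^*\subseteq\mathfrak{A}_{a,e}$ (via triviality of $\overline{p}\ast\overline{r}\ast\overline{q}^{-1}$ and SOT-closedness), a point the paper simply asserts when writing $\chi_p^*(P\otimes I_a)\chi_p\in\mathfrak{A}_{a,e}$.
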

\begin{proof} It is obvious that $\mathfrak{A}_{a,p}\cap\mathfrak{A}_{a,q}={0}$ for $p\neq
q$.

 Let us show that the equality
$\mathfrak{A}_{a,e}\chi_p=\chi_p\mathfrak{A}_{a,e}$ holds. Indeed, we
take an element $P\otimes I_a\in\mathfrak{A}_{a,e}$. Using assertion~(2) in Lemma~\ref{pcikl} together with the commutativity of the algebra
$\mathfrak{A}_{a,e}$, we obtain the following equalities:
$$
(P\otimes I_a)\chi_p=(P\otimes I_a)\chi_p\chi_p^*\chi_p=\chi_p\chi_p^*(P\otimes I_a)\chi_p=\chi_p(Q\otimes I_a),
$$
where
$$
Q\otimes I_a=\chi_p^*(P\otimes
I_a)\chi_p\in\mathfrak{A}_{a,e}.
$$
This yields the desired equality.

By Lemma~\ref{pq}, for every $p,q\in G_a$  we obtain
$$
\mathfrak{A}_{a,p}\mathfrak{A}_{a,q}\subseteq
\mathfrak{A}_{a,e}\chi_p\mathfrak{A}_{a,e}\chi_q=\mathfrak{A}_{a,e}\chi_p\chi_q
\subseteq\mathfrak{A}_{a,p\ast q}.
$$

Further, for $P\otimes
I_a\in\mathfrak{A}_{a,e}$ we have the equalities
$$
((P\otimes I_a)\chi_p)^*=\chi_p^*(P\otimes I_a)=\chi_{p^{-1}}(P\otimes I_a)=(Q\otimes I_a)\chi_{p^{-1}}.
$$
This means that the property
$
(\mathfrak{A}_{a,p})^*=\mathfrak{A}_{a,p^{-1}}
$
is fulfilled.

Finally, Lemma~\ref{phi} implies that there is no an element in the space $\mathfrak{A}_{a,p}$ that can be approximated by linear combinations of elements from the family $\{\mathfrak{A}_{a,q}\}_{q\in
G_a\setminus\{p\}}$.
\end{proof}

\section{Corona for a net of $C^*$-algebras}

The results of the preceding paragraph imply the existence of the family of the
graded $C^*$-algebras $\{\mathfrak{A}_a\}_{a\in K}$
over the set $K$.

 For elements $a\leq b\in K$ we define the mapping $\alpha_{ba}:
\mathfrak{A}_a\rightarrow\mathfrak{A}_b$ as follows. Taking an element
$\chi_{\overline{p}}\in\overline{G}_a$, we set
$$
\alpha_{ba}({\chi_{\overline{p}}})=
\chi_a^b\chi_{\overline{p}}\chi_a^{b*}=\chi_{\overline{(b,a)}\ast\overline{p}\ast(a,b)}.
$$
If  we have the equivalence $\overline{p}\sim i_a$ then we get the equivalence
$\overline{(b,a)}\ast\overline{p}\ast(a,b)\sim i_b$ and the inclusion
$\alpha_{ba}(\mathfrak{A}_{a,e})\subseteq\mathfrak{A}_{b,e}$. Let $p\in G_a$. Then one has the equalities
$$
\alpha_{ba}({\chi_{p}})=
\chi_a^b\bigg(\bigvee\limits_{\overline{p}\in
p}\chi_{\overline{p}}\bigg)\chi_a^{b*}= \bigvee\limits_{\overline{p}\in
p}\chi_{\overline{(b,a)}\ast\overline{p}\ast(a,b)}.
$$
Since the inclusion
$$
\{\overline{(b,a)}\ast\overline{p}\ast(a,b) \ | \ \overline{p}\in
p\}\subseteq
[\overline{(b,a)}\ast\overline{p}\ast(a,b)]=\sigma_{ba}(p)
$$
holds we conclude that
$$
\alpha_{ba}({\chi_{p}})\in\mathfrak{A}_{b,\sigma_{ba}(p)},
$$
where
$\sigma_{ba}:G_a\rightarrow G_b$
is an isomorphism given by formula (\ref{sigma}). Therefore we have the inclusion
$$
\alpha_{ba}(\mathfrak{A}_{a,p})\subseteq
\mathfrak{A}_{b,\sigma_{ba}(p)}.
$$

Thus, the mapping $\alpha_{ba}:
\mathfrak{A}_a\rightarrow\mathfrak{A}_b$ preserves the graduation of the algebras. Moreover, this mapping is an embedding. Really,
using Lemma~\ref{chi}, for all $A,B\in\mathfrak{A}_a$ we obtain the equalities
$$
\alpha_{ba}(AB)=\chi_a^bAB\chi_a^{b*}=\chi_a^bA\chi_a^{b*}\chi_a^bB\chi_a^{b*}=\alpha_{ba}(A)\alpha_{ba}(B).
$$

Lemma~\ref{chi} implies that the property  for the above mappings
$$
\alpha_{ca}=\alpha_{cb}\circ\alpha_{ba}
$$
is fulfilled whenever $a\leq b\leq
c\in K$.

This means that the family of the algebras $\{\mathfrak{A}_a\}_{a\in K}$ constitutes
the net of $C^*$-algebras
$$
\big(K,\mathfrak{A}_a,\alpha_{ba}\big)_{a\leq b\in K}
$$
over the set~$K$, where each mapping $\alpha_{ba}:\mathfrak{A}_a\rightarrow\mathfrak{A}_b$
is an embedding. This net satisfies the isotony property (see
\cite{H}). The algebras of the net will be called \emph{the local algebras}.
We note that if all the mappings $\gamma_{ba}:H_a\rightarrow H_b$
are isomorphisms for $a\leq b\in K$ then the mappings $\alpha_{ba}:\mathfrak{A}_a\rightarrow\mathfrak{A}_b$ are isomorphisms
of algebras.

We represent the partially ordered set $K$ as the union of all its maximal upward directed subsets:
$$
K=\bigcup\limits_{i\in I}K_i.
$$
Such a representation is unique.
Further we consider the net
$$
\big(K_i,\mathfrak{A}_a,\alpha_{ba}\big)_{a\leq b\in K_i}
$$
over the upward directed set $K_i$. Since the mapping $\alpha_{ba}:
\mathfrak{A}_a\hookrightarrow\mathfrak{A}_b$ is an embedding we may assume that
the inclusion $\mathfrak{A}_a\subseteq\mathfrak{A}_b$ holds for $a\leq b$.
We denote by
$$
\mathfrak{A}_i=\overline{\bigcup\limits_{a\in K_i}\mathfrak{A}_a}
$$
the inductive limit of the system of the
$C^*$-algebras $\{\mathfrak{A}_a\}_{a\in K_i}$ over the directed set
 $K_i$, that is, the completion with respect to the unique $C^*$-norm on
$\bigcup_{a\in K_i}\mathfrak{A}_a$. The algebra
$\mathfrak{A}_i$ is called \emph{a quasi-local algebra}.

We call the family of the limit algebras $\{\mathfrak{A}_i\!\}_{i\in I}$
\emph{the corona} for the net of $C^*$\!-algebras
$(K,\mathfrak{A}_a,\alpha_{ba})_{a\leq b\in K}$.

\begin{theorem} In the corona for every $i\in I$ the algebra  $\mathfrak{A}_i $   is a $\pi_1(K)$-graded
   $C^*$-algebra, that is, the following representation holds:
    $$
    \mathfrak{A}_i=\overline{\bigoplus\limits_{p\in \pi_1(K)}\mathfrak{A}_{i,p}}.
    $$
\end{theorem}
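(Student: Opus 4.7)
The plan is to promote the $\pi_1(K)$-grading of each local algebra $\mathfrak{A}_a$ to the inductive limit $\mathfrak{A}_i$. Since $K_i$ is upward directed, a direct computation using $\overline{(c,b)}\ast\overline{(b,a)}\sim\overline{(c,a)}$ and $(a,b)\ast(b,c)\sim(a,c)$ shows
$$
\sigma_{cb}\circ\sigma_{ba}=\sigma_{ca}\qquad(a\leq b\leq c\in K_i),
$$
so I may identify every group $G_a$, $a\in K_i$, with a single copy of $\pi_1(K)$ in a coherent way. For $p\in\pi_1(K)$ and $a\in K_i$, write $p_a\in G_a$ for the image of $p$ under this identification, and set
$$
\mathfrak{A}_{i,p}=\overline{\bigcup_{a\in K_i}\mathfrak{A}_{a,p_a}}.
$$
Because $\alpha_{ba}(\mathfrak{A}_{a,p_a})\subseteq\mathfrak{A}_{b,\sigma_{ba}(p_a)}=\mathfrak{A}_{b,p_b}$, the union is an increasing chain inside $\mathfrak{A}_i$, and $\mathfrak{A}_{i,p}$ is a well-defined closed linear subspace.

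Next I would check the algebraic axioms of a $\pi_1(K)$-graded $C^*$-algebra. The inclusion $\mathfrak{A}_{i,p}\cdot\mathfrak{A}_{i,q}\subseteq\mathfrak{A}_{i,p\ast q}$ and the $*$-closure $(\mathfrak{A}_{i,p})^{*}=\mathfrak{A}_{i,p^{-1}}$ follow by picking a common index $a\in K_i$, which exists by upward directedness, and invoking the corresponding identities already established for $\mathfrak{A}_a$ in the previous theorem. Density of $\sum_p\mathfrak{A}_{i,p}$ in $\mathfrak{A}_i$ is immediate: $\bigcup_{a\in K_i}\mathfrak{A}_a$ is dense in $\mathfrak{A}_i$ by construction, and each $\mathfrak{A}_a$ equals $\overline{\bigoplus_p\mathfrak{A}_{a,p_a}}$.

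The crux is to verify that $\sum_p\mathfrak{A}_{i,p}$ is a direct sum in Exel's sense \cite[\S 19]{exel}. For this I would extend the conditional expectations $\Phi_a:\mathfrak{A}_a\to\mathfrak{A}_{a,e}$ of Lemma~\ref{phi} to a conditional expectation $\Phi:\mathfrak{A}_i\to\mathfrak{A}_{i,e}$. The required compatibility
$$
\Phi_b\circ\alpha_{ba}=\alpha_{ba}\circ\Phi_a\qquad(a\leq b\in K_i)
$$
holds because $\alpha_{ba}$ is grade-preserving and $\sigma_{ba}$ carries the unit of $G_a$ onto the unit of $G_b$, so every component of $B\in\mathfrak{A}_a$ of non-trivial grade is mapped by $\alpha_{ba}$ into a component of non-trivial grade, and is therefore annihilated by $\Phi_b$. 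Since each $\Phi_a$ has norm one, the compatible family extends uniquely, by continuity on the dense set $\bigcup_a\mathfrak{A}_a$, to a norm-one conditional expectation $\Phi:\mathfrak{A}_i\to\mathfrak{A}_{i,e}$ which, by construction, vanishes on $\mathfrak{A}_{i,p}$ for every $p\neq e$. A standard argument then finishes directness: if $\sum_jA_{p_j}=0$ with distinct $p_j\in\pi_1(K)$ and $A_{p_j}\in\mathfrak{A}_{i,p_j}$, then multiplying on the left by $A_{p_k}^{*}\in\mathfrak{A}_{i,p_k^{-1}}$ and applying $\Phi$ gives $A_{p_k}^{*}A_{p_k}=0$, whence $A_{p_k}=0$.

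The main obstacle I anticipate is purely bookkeeping: establishing the coherence of the identifications $\sigma_{ba}$ uniformly over $K_i$ and verifying the compatibility $\Phi_b\circ\alpha_{ba}=\alpha_{ba}\circ\Phi_a$ carefully enough to pass to the inductive limit. Once both points are settled, the rest reduces to applying the grading of the local algebras to finite configurations in $K_i$.
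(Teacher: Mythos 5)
Your proof is correct and takes essentially the same route as the paper: the graded components of the limit algebra are defined as $\mathfrak{A}_{i,p}=\overline{\bigcup_{a\in K_i}\mathfrak{A}_{a,p}}$, the key point being that the embeddings $\alpha_{ba}$ preserve the grading via $\sigma_{ba}$. The paper's proof consists of exactly this observation and the displayed formulas; your extra work (the coherence $\sigma_{cb}\circ\sigma_{ba}=\sigma_{ca}$ and the extension of the conditional expectations of Lemma~4.4 to $\mathfrak{A}_i$ to establish directness of the sum) is sound and fills in details the paper leaves implicit.
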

\begin{proof} It follows from the fact that the embedding $\alpha_{ba}:
\mathfrak{A}_a\rightarrow\mathfrak{A}_b$ preserves the graduation of the algebras. We have
the representations

\begin{equation*}
\mathfrak{A}_{i,e}=\overline{\bigcup\limits_{a\in
K_i}\mathfrak{A}_{a,e}}\quad\mbox{ and
}\quad\mathfrak{A}_{i,p}=\overline{\bigcup\limits_{a\in
K_i}\mathfrak{A}_{a,p}},\quad p\in\pi_1(K).\qedhere
\end{equation*}
\end{proof}

Assume we are given two nets
$$
\big(K,H^K_a,\gamma_{ba}\big)_{a\leq b\in K}\quad\mbox{ and
}\quad\big(L,H^L_x,\gamma_{yx}\big)_{x\leq y\in L}
$$
over partially ordered sets
 $K$ and $L$, respectively, where $H^K_a$ and $H^L_x$
are Hilbert  spaces and
$\gamma_{ba}:H^K_a\rightarrow H^K_b$ as well as $\gamma_{yx}:H^L_x\rightarrow
H^L_y$
are isometric embeddings for all $a\leq b$ and $x\leq y$.

A pair
$$
(\varphi,\Phi):\big(K,H^K_a,\gamma_{ba}\big)_{a\leq b\in K}\rightarrow
\big(L,H^L_x,\gamma_{yx}\big)_{x\leq y\in L}
$$
is called \emph{a morphism for nets of Hilbert spaces} if the following properties are fulfilled:

1) $\varphi:K\rightarrow L$
is a morphism of partially ordered sets, i.e., the condition $a\leq b$ implies
$\varphi(a)\leq\varphi(b)$;

2) the mapping
$$
\Phi:\bigoplus\limits_{a\in
K}H_a^K\rightarrow\bigoplus\limits_{x\in L}H_x^L
$$
 as well as the mappings
$$
\Phi_a=\Phi
|_{H^K_a}:H^K_a\hookrightarrow H^L_{\varphi(a)}
$$
 for all $a\in K$ are isometric embeddings;

3) the equality
$$
\Phi_b\circ\gamma_{ba}=\gamma_{\varphi(b)\varphi(a)}\circ\Phi_a
$$
holds whenever $a\leq b$.

Similarly, a pair
$$
(\varphi,\Phi):\big(K,\mathfrak{A}_a^K,\alpha_{ba}\big)_{a\leq b\in
K}\rightarrow \big(L,\mathfrak{A}_x^L,\alpha_{yx}\big)_{x\leq y\in L}
$$
is \emph{a morphism for nets of $C^*$-algebras} if
$$
\Phi=\{\Phi_a\}_{a\in K},
$$
 where
$\Phi_a:\mathfrak{A}^K_a\rightarrow\mathfrak{A}^L_{\varphi(a)}$
is a $*$-homomorphism of $C^*$-algebras for every $a\in K$, and the equality
$$
\Phi_b\circ\alpha_{ba}=\alpha_{\varphi(b)\varphi(a)}\circ\Phi_a
$$
holds whenever $a\leq b$. A morphism is said to be \emph{faithful} if  $\Phi_a$
is an embedding for every $a\in K$.

Let $\{\mathfrak{A}_i^K\}_{i\in I}$ and $\{\mathfrak{A}_j^L\}_{j\in
J}$ be the coronas for the nets of  $C^*$-algebras \break
$(K,\mathfrak{A}^K_a,\alpha_{ba})_{a\leq b\in K}$ and
$(L,\mathfrak{A}^L_x,\alpha_{yx})_{x\leq y\in L}$, respectively.
\emph{A morphism of coronas} is a family of mappings
 $\Phi^*=\{\Phi^*_i\}_{i\in I}$ such that for every index $i\in I$
there exists an index $j\in J$ for which
$\Phi_i^*:\mathfrak{A}_i^K\rightarrow\mathfrak{A}_j^L$ is a $*$-homomorphism
of $C^*$-algebras.

A morphism $\varphi:K\rightarrow L$ induces the morphism
$\overline{S}^K\rightarrow\overline{S}^L$, which is denoted by the same letter, as follows:
if  $\overline{p}$
is a sequence of elementary paths of the forms $(a,b)$ and
$\overline{(b,a)}$ on $K$ then we set that $\varphi(\overline{p})$ is a similar
sequence of elementary paths of the forms
$(\varphi(a),\varphi(b))$ and $\overline{(\varphi(b),\varphi(a))}$ on
$L$.

We notice that if  $\overline{p}_1\sim\overline{p}_2$ then
$\varphi(\overline{p}_1)\sim\varphi(\overline{p}_2)$. Therefore,
the morphism $\varphi$ induces the homomorphisms of the groupoids
$$
\varphi^*:\overline{S}^K/_{\sim}\rightarrow\overline{S}^L/_{\sim}
$$
and groups
$$
\varphi^*:G_a^K\rightarrow G_{\varphi(a)}^L
$$
defined by
$\varphi^*([\overline{p}])=[\varphi(\overline{p})]$. Consequently, one gets
the homomorphism of the first homotopy groups
$$
\varphi^*:\pi_1(K)\rightarrow \pi_1(L).
$$

\begin{theorem} Let $\varphi^*:\pi_1(K)\rightarrow \pi_1(L)$
be an injective morphism of the first homotopy groups and $\Phi_a:H^K_a\rightarrow
H^L_{\varphi(a)}$ be an isometric isomorphism for every $a\in
K$. Then the morphism for the nets of Hilbert spaces
$$
(\varphi,\Phi):\big(K,H^K_a,\gamma_{ba}\big)_{a\leq b\in K}\rightarrow
\big(L,H^L_x,\gamma_{yx}\big)_{x\leq y\in L}
$$
induces the faithful morphism for nets of $C^*$-algebras
$$
(\varphi,\Phi^*):\big(K,\mathfrak{A}_a^K,\alpha_{ba}\big)_{a\leq b\in
K}\rightarrow \big(L,\mathfrak{A}_x^L,\alpha_{yx}\big)_{x\leq y\in L}.
$$
\end{theorem}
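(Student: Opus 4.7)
The plan is to construct, for each $a \in K$, a $*$-homomorphism $\Phi^*_a : \mathfrak{A}^K_a \to \mathfrak{A}^L_{\varphi(a)}$ sending every cycle generator $\chi^K_{\overline{p}}$ (for $\overline{p} \in \overline{G}^K_a$) to $\chi^L_{\varphi(\overline{p})}$, and then to verify that the family $\Phi^* = \{\Phi^*_a\}_{a \in K}$ is faithful and commutes with the net embeddings $\alpha_{ba}$.

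First I would show that the assignment $\chi^K_{\overline{p}} \mapsto \chi^L_{\varphi(\overline{p})}$ respects all the relevant structure, so that it extends uniquely by taking linear combinations, products, adjoints, and strong closure. The crucial input is the morphism identity $\Phi_b \circ \gamma^K_{ba} = \gamma^L_{\varphi(b)\varphi(a)} \circ \Phi_a$ combined with the hypothesis that each $\Phi_a$ is a unitary isomorphism; together they give $\Phi_a(H^K_{\overline{p}}) = H^L_{\varphi(\overline{p})}$, and hence the partial-isometry and projection relations of Lemma~\ref{chi}, the representation of trivial cycles in Theorem~\ref{tr}, and the $p$-cycle identities of Lemma~\ref{pcikl} all transport correctly. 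This also shows that $\Phi^*_a(\mathfrak{A}^K_{a,p}) \subseteq \mathfrak{A}^L_{\varphi(a),\varphi^*(p)}$ for each $p \in G^K_a \cong \pi_1(K)$. The net-compatibility relation $\Phi^*_b \circ \alpha^K_{ba} = \alpha^L_{\varphi(b)\varphi(a)} \circ \Phi^*_a$ then follows by direct evaluation on generators using the explicit formula $\alpha_{ba}(\chi_{\overline{p}}) = \chi_{\overline{(b,a)} \ast \overline{p} \ast (a,b)}$.

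The heart of the proof is the faithfulness of each $\Phi^*_a$. Because $\Phi^*_a$ preserves the $\pi_1$-grading and $\varphi^*$ is injective, the conditional expectations of Lemma~\ref{phi} on the two sides intertwine $\Phi^*_a$; for any $B \in \mathfrak{A}^K_a$ with $\Phi^*_a(B) = 0$, this reduces the question to the injectivity of $\Phi^*_a$ on the abelian zero-grade subalgebra $\mathfrak{A}^K_{a,e}$. On $\mathfrak{A}^K_{a,e}$, every generator $\chi^K_{\overline{p}}$ is a projection $Q^K_{\overline{p}} \otimes I_a$ by Theorem~\ref{tr} and is sent to $Q^L_{\varphi(\overline{p})} \otimes I_{\varphi(a)}$; the assignment $Q \mapsto \Phi_a Q \Phi_a^{-1}$ is a $*$-isomorphism between the generating projection lattices, which extends to a faithful $*$-homomorphism on the strongly closed subalgebras they generate.

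The principal obstacle I anticipate is the rigorous reduction to the zero-graded piece, namely establishing faithfulness of the conditional expectation on the graded $C^*$-algebra and confirming that the injectivity of $\varphi^*$ on $\pi_1(K)$ genuinely prevents the collapse of distinct homogeneous components on the $L$-side. The Fell-bundle formalism of \cite[\S 19]{exel} should supply the needed faithfulness of the grading expectation, but verifying its hypotheses will require additional bookkeeping with the $p$-cycle domains $H^K_p$ and their images under $\Phi_a$.
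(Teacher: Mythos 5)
Your construction half is essentially the paper's: you define $\Phi^*_a$ on the cycle generators by $\chi_{\overline{p}}\mapsto\chi_{\varphi(\overline{p})}$, transport the relations of Lemma~\ref{chi}, Theorem~\ref{tr} and Lemma~\ref{pcikl} using the identity $\Phi_b\circ\gamma_{ba}=\gamma_{\varphi(b)\varphi(a)}\circ\Phi_a$ together with unitarity of $\Phi_a$ (which yields $\Phi_a(H^K_{\overline{p}})=H^L_{\varphi(\overline{p})}$, exactly the paper's key equality $\Phi_{\partial_1\overline{p}}(H_{\overline{p}})=H_{\varphi(\overline{p})}$, obtained there via the spatial intertwiner $\Phi\otimes\widehat{\varphi}$ and the relation $(\Phi_b\otimes\widehat{\varphi})\chi_a^b=\chi_{\varphi(a)}^{\varphi(b)}(\Phi_a\otimes\widehat{\varphi})$), establish $\Phi^*_a(\mathfrak{A}^K_{a,p})\subseteq\mathfrak{A}^L_{\varphi(a),\varphi^*(p)}$, and verify $\Phi^*_b\circ\alpha_{ba}=\alpha_{\varphi(b)\varphi(a)}\circ\Phi^*_a$ on generators -- all of this is what the paper does. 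Where you genuinely diverge is faithfulness. The paper concludes the embedding property directly and rather tersely: $\Phi^*_a$ is injective on the zero fibre because trivial cycles are of the form $Q_{\overline{p}}\otimes I_a$ and $\Phi_a$ is an isomorphism, and then ``because $\varphi^*$ is an embedding'' the graded map $\overline{\bigoplus_p\mathfrak{A}^K_{a,p}}\to\overline{\bigoplus_g\mathfrak{A}^L_{\varphi(a),g}}$ is an embedding; the passage from componentwise injectivity to injectivity on the closed sum is left implicit. You instead route this passage through the conditional expectation of Lemma~\ref{phi}: intertwine the expectations (which uses injectivity of $\varphi^*$ to keep nonunit components away from the unit fibre) and reduce to the abelian zero-grade piece, where conjugation by $\Phi_a$ gives injectivity. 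This is the standard and arguably more rigorous mechanism, but, as you yourself flag, it needs \emph{faithfulness} of the expectation $\Phi$ of Lemma~\ref{phi}, which the paper never proves; it does hold in this concrete representation (the left action of $G_a$ on $S_a$ is free, since $[\overline{p}\ast\overline{q}]=[\overline{q}]$ forces $\overline{p}\sim i_a$, so the representation is of regular type and the compression onto the zero-Fourier-mode is faithful), or one can verify the topologically-graded hypotheses from \cite[\S 19]{exel}, but either way that verification is extra work your proposal defers rather than completes. In short: the paper's spatial argument is shorter and self-contained but glosses the very step you isolate; your expectation argument makes that step explicit at the price of one unproved (though true and provable) faithfulness claim.
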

\begin{proof} Let us consider the direct sums of Hilbert spaces
$$
\mathcal{H}^K=\bigoplus\limits_{a\in K}H_a^K\otimes
l^2(S_a^K)
$$
 and
$$
\mathcal{H}^L=\bigoplus\limits_{x\in L}H_x^L\otimes
l^2(S_x^L).
$$
We define the mapping  $\Phi\otimes
\widehat{\varphi}:\mathcal{H}^K\rightarrow\mathcal{H}^L$ by setting
$$
(\Phi\otimes\widehat{\varphi})(h\otimes e_p)=\Phi(h)\otimes
e_{\varphi^*(p)}
$$
for every $h\otimes e_p\in \mathcal{H}^K$.
It is clear that the mapping
$$
\Phi\otimes\widehat{\varphi}=\bigoplus\limits_{a\in
K}\Phi_a\otimes\widehat{\varphi}
$$
is an isometric embedding
and one has the inclusion
$$
(\Phi_a\otimes\widehat{\varphi})(H_a^K\otimes
l^2(S_a^K))\subseteq H_{\varphi(a)}^L\otimes l^2(S_{\varphi(a)}^L).
$$

We claim that for every $\overline{p}\in\overline{S}$ the equality
$$
\widehat{\varphi}\circ
T_{\overline{p}}=T_{\varphi(\overline{p})}\circ\widehat{\varphi}$$
holds, where $T_{\overline{p}}e_{q}=e_{[\overline{p}\ast\overline{q}]}$.
Indeed, we have the equalities
$$
\widehat{\varphi}
T_{\overline{p}}e_q=\widehat{\varphi}e_{[\overline{p}\ast\overline{q}]}=
e_{[\varphi(\overline{p})\ast\varphi(\overline{q})]}
=T_{\varphi(\overline{p})}e_{\varphi^*(q)}=T_{\varphi(\overline{p})}\widehat{\varphi}e_q,
$$
as claimed.

Let us show the validity of the equality
\begin{equation}\label{hi}(\Phi_b\otimes\widehat{\varphi})\circ\chi_a^b=
\chi_{\varphi(a)}^{\varphi(b)}\circ(\Phi_a\otimes\widehat{\varphi}).\end{equation}
To this end, we write the operator $\chi_a^b$ in the form
$\chi_a^b=\gamma_{ba}\otimes T_{\overline{(b,a)}}$. Then we have the chain of the following equalities:
{\allowdisplaybreaks
\begin{align*}
(\Phi_b\otimes\widehat{\varphi})\chi_a^b&=(\Phi_b\otimes\widehat{\varphi})(\gamma_{ba}\otimes T_{\overline{(b,a)}})
\\
&=
\Phi_b\gamma_{ba}\otimes \widehat{\varphi}T_{\overline{(b,a)}}
=\gamma_{\varphi(b)\varphi(a)}\Phi_a\otimes T_{\overline{(\varphi(b),\varphi(a))}}\widehat{\varphi}
\\
&=
(\gamma_{\varphi(b)\varphi(a)}\otimes
T_{\overline{(\varphi(b),\varphi(a))}})(\Phi_a\otimes
\widehat{\varphi})
=\chi_{\varphi(a)}^{\varphi(b)}(\Phi_a\otimes\widehat{\varphi}).
\end{align*}
}
Since all mappings $\Phi_a$
are isometric isomorphisms we obtain the equality
$\Phi_{\partial_1\overline{p}}(H_{\overline{p}})=H_{\varphi(\overline{p})}$
for $\overline{p}\in\overline{S}$. Hence, it follows from (\ref{hi})
that
$$
(\Phi_a\otimes\widehat{\varphi})\circ\chi_a^{b*}=
\chi_{\varphi(a)}^{\varphi(b)*}\circ(\Phi_b\otimes\widehat{\varphi}).
$$
Therefore, for each $\overline{p}$ one has the equality
$$
(\Phi_{\partial_0\overline{p}}\otimes\widehat{\varphi})\circ\chi_{\overline{p}}=
\chi_{\varphi(\overline{p})}\circ(\Phi_{\partial_1\overline{p}}\otimes\widehat{\varphi}).
$$

We put $\Phi^*_a(\chi_{\overline{p}})=\chi_{\varphi(\overline{p})}$
for every $\overline{p}\in\overline{G}_a$. If  the equivalence
 $\overline{p}\sim i_a$ holds then we have the equivalence $\varphi(\overline{p})\sim
i_{\varphi(a)}$ as well. This means that if the cycle $\chi_{\overline{p}}$
is trivial in the algebra $\mathfrak{A}_a^K$ then
the cycle $\chi_{\varphi(\overline{p})}$ is also trivial in the algebra
$\mathfrak{A}_{\varphi(a)}^L$. Since $\Phi_a$ is an isomorphism
and each trivial cycle has the form
$\chi_{\overline{p}}=Q_{\overline{p}}\otimes I_a$ the mapping
$\chi_{\overline{p}}\mapsto \chi_{\varphi(\overline{p})}$ defined on the set of generators can be extended to the embedding
$\Phi^*_a:\mathfrak{A}_{a,e}^K\rightarrow
\mathfrak{A}_{\varphi(a),e}^L$. Further, we extend the embedding  $\Phi^*_a$ to the whole
algebra $\mathfrak{A}_a^K$ as follows: if
$$
\chi_p=\bigvee\limits_{\overline{p}\in p} \chi_{\overline{p}}
$$
then we set
$$
\Phi^*_a(\chi_{p})=\bigvee\limits_{\overline{p}\in p}
\Phi^*_a(\chi_{\overline{p}})=\bigvee\limits_{\overline{p}\in
p}\chi_{\varphi(\overline{p})}.
$$
Since the condition
$$
\{\varphi(\overline{p})
\ | \ \overline{p}\in
p\}\subseteq[\varphi(\overline{p})]=\varphi^*(p)
$$
holds we get
$$
\Phi^*_a(\chi_{p})\in \mathfrak{A}_{\varphi(a),\varphi^*(p)}^L.
$$
Thus, one has the inclusion
$\Phi^*_a(\mathfrak{A}_{a,p}^K)\subseteq\mathfrak{A}_{\varphi(a),\varphi^*(p)}^L$.
Moreover, because  $\varphi^*$ is an embedding we obtain the embedding
$$
\Phi^*_a:\mathfrak{A}_a^K=\overline{\bigoplus\limits_{p\in\pi_1(K)}\mathfrak{A}_{a,p}^K}
\rightarrow\mathfrak{A}_{\varphi(a)}^L=\overline{\bigoplus\limits_{g\in\pi_1(L)}\mathfrak{A}_{\varphi(a),g}^L}
$$
that preserves the graduation.

It remains to check  the equality
$$
\Phi^*_b\circ\alpha_{ba}=\alpha_{\varphi(b)\varphi(a)}\circ\Phi^*_a
$$
for every $a\leq b\in K$. To do this, we check its validity for the generators. Indeed, for
 $\overline{p}\in \overline{G}_a$ we have
$$
\Phi^*_b\alpha_{ba}(\chi_{\overline{p}})=\Phi^*_b(\chi_a^b\chi_{\overline{p}}\chi_a^{b*})=\chi_{\varphi(a)}^{\varphi(b)}
\chi_{\varphi(\overline{p})}\chi_{\varphi(a)}^{\varphi(b)*}=\alpha_{\varphi(b)\varphi(a)}\Phi^*_a(\chi_{\overline{p}}).
$$
This completes the proof of the theorem.
\end{proof}

\begin{corollary} Let $\{\mathfrak{A}_i^K\}_{i\in I}$ and $\{\mathfrak{A}_j^L\}_{j\in
J}$ be the coronas for  nets
$$
\big(K,\mathfrak{A}^K_a,\alpha_{ba}\big)_{a\leq b\in
K}\quad\mbox{ and }\quad\big(L,\mathfrak{A}^L_x,\alpha_{yx}\big)_{x\leq y\in L},
$$
respectively. Then a morphism for nets of  $C^*$-algebras
$$
(\varphi,\Phi^*):\big(K,\mathfrak{A}_a^K,\alpha_{ba}\big)_{a\leq b\in
K}\rightarrow (L,\mathfrak{A}_x^L,\alpha_{yx})_{x\leq y\in L}
$$
is extended to a morphism of coronas $\Phi^*=\{\Phi^*_i\}_{i\in I}$ so that
for every  $i\in I$ there exists an index $j\in J$ such that
$$
\Phi_i^*(\mathfrak{A}_i^K)\subseteq\mathfrak{A}_j^L.
$$
\end{corollary}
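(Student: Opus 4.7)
The strategy is to use the decomposition of each poset into its maximal upward directed subsets, show that the poset morphism $\varphi$ sends the pieces of $K$ into pieces of $L$, and then use the compatibility identity $\Phi^*_b\circ\alpha_{ba}=\alpha_{\varphi(b)\varphi(a)}\circ\Phi^*_a$ to extend the morphism to the inductive limits that make up the coronas.

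First, I would fix $i\in I$ and consider the maximal upward directed subset $K_i\subseteq K$. Given $a,b\in K_i$, by upward directedness there exists $c\in K_i$ with $a\leq c$ and $b\leq c$, and since $\varphi$ preserves the order we get $\varphi(a)\leq\varphi(c)$ and $\varphi(b)\leq\varphi(c)$ in $L$. Hence $\varphi(K_i)$ is an upward directed subset of $L$, so it is contained in at least one maximal upward directed subset $L_j$ with $j\in J$; pick any such $j$ (if there is more than one, any choice will do).

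Next, for each $a\in K_i$ we have the $*$-homomorphism $\Phi^*_a:\mathfrak{A}^K_a\to\mathfrak{A}^L_{\varphi(a)}$, and $\varphi(a)\in L_j$ gives, via the embedding $\mathfrak{A}^L_{\varphi(a)}\hookrightarrow\mathfrak{A}^L_j$ supplied by the inductive system over $L_j$, a $*$-homomorphism into $\mathfrak{A}^L_j$. The compatibility relation $\Phi^*_b\circ\alpha_{ba}=\alpha_{\varphi(b)\varphi(a)}\circ\Phi^*_a$ for $a\leq b\in K_i$ says precisely that the maps $\{\Phi^*_a\}_{a\in K_i}$ form a morphism of the inductive system $(K_i,\mathfrak{A}^K_a,\alpha_{ba})$ into the inductive system $(L_j,\mathfrak{A}^L_x,\alpha_{yx})$. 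By the universal property of the inductive limit, they glue to a well-defined $*$-homomorphism on the algebraic union $\bigcup_{a\in K_i}\mathfrak{A}^K_a$ into $\mathfrak{A}^L_j$; since $*$-homomorphisms of $C^*$-algebras are contractive, this extends uniquely by continuity to a $*$-homomorphism
\[
\Phi^*_i:\mathfrak{A}^K_i=\overline{\bigcup_{a\in K_i}\mathfrak{A}^K_a}\longrightarrow\mathfrak{A}^L_j,
\]
with $\Phi^*_i(\mathfrak{A}^K_i)\subseteq\mathfrak{A}^L_j$ as required.

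The only real subtlety is the choice of $j$ for each $i$, i.e.\ the assignment $i\mapsto j(i)$: the set $\varphi(K_i)$ need not itself be maximal, and in principle it could sit inside several maximal upward directed subsets of $L$, so the ``morphism of coronas'' is well-defined only up to such a choice, matching the flexibility already built into the definition given in the text. Performing this construction for every $i\in I$ yields the desired family $\Phi^*=\{\Phi^*_i\}_{i\in I}$.
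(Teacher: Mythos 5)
Your proposal is correct and follows essentially the same route as the paper: decompose $K$ into the maximal upward directed pieces $K_i$, observe that $\varphi(K_i)$ lands in some maximal upward directed $L_j$, and extend $\{\Phi^*_a\}_{a\in K_i}$ to the closure $\mathfrak{A}^K_i=\overline{\bigcup_{a\in K_i}\mathfrak{A}^K_a}$ using the compatibility with the connecting embeddings. You merely spell out two points the paper leaves implicit (that $\varphi(K_i)$ is upward directed, and the continuity/universal-property argument for the extension), which is fine.
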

\begin{proof} Let us consider the inductive limit
$$
\mathfrak{A}_i^K=\overline{\bigcup\limits_{a\in
K_i}\mathfrak{A}_a^K}.
$$
We put
$$
\Phi_i^*(\mathfrak{A}_i^K)=\overline{\bigcup\limits_{a\in
K_i}\Phi_a^*(\mathfrak{A}_a^K)}\subseteq\overline{\bigcup\limits_{a\in
K_i}\mathfrak{A}_{\varphi(a)}^L}.
$$
Since the inclusion $\varphi(K_i)\subseteq
L_j$ holds for some index $j\in J$ we have
$\Phi_i^*(\mathfrak{A}_i^K)\subseteq \mathfrak{A}_j^L$, as required.
\end{proof}

The following example demonstrates that if  $\varphi^*$ is not an embedding
then a morphism for nets of $C^*$-algebras is not faithful, in general.

\begin{Ex}{Example} Let $Y$ be the open unit disk in the complex plane with
 the center at the coordinate origin and $X=Y\backslash\{(0,0)\}$.
Let $K$ and $L$
be the families of all open simply connected subsets of the sets $X$ and
$Y$, respectively. The families $K$ and $L$ are partially ordered sets under the inclusion relation. Moreover, the set $L$ is upward directed. It is easy to see that the equalities $\pi_1(K)=\mathbb{Z}$ and
$\pi_1(L)=\{0\}$ hold. The inclusion $X\subseteq Y$
yields the embedding $\varphi: K\rightarrow L$. Therefore, we have the
homomorphism of the first homotopy groups
$\varphi^*:\pi_1(K)\rightarrow\pi_1(L)$ such that $\varphi^*(n)=0$
for each $n\in\mathbb{Z}$.

Further, let $H$ be a Hilbert space. We consider the bundles of Hilbert spaces
 $(K,H_a,\gamma_{ba})_{a\leq b\in
K}$ and $(L,H_x,\gamma_{yx})_{x\leq y\in L}$ over $K\!$ and $L\!$, respectively (see~\cite{RuzziVasselli}), where $H_a=H_x=H$ and $\gamma_{ba}$,
$\gamma_{yx}$
are the identity mappings. Then one associates to them the bundles  of $C^*$-algebras
$(K,\mathfrak{A}_a^K,\alpha_{ba})_{a\leq b\in K}$ and
$(L,\mathfrak{A}_x^L,\alpha_{yx})_{x\leq y\in L}$ over $K$ and $L$, respectively,
where $\alpha_{ba}$ as well as $\alpha_{yx}$
are isomorphisms. The $C^*$-algebra~$\mathfrak{A}_a^K$ is
$\mathbb{Z}$-graded. It is generated by the operators
 $\chi_n=I\otimes T_{0}^n$, $n\in \pi_1(K)$. Here
$I\otimes T_{0}$ is the unitary two-sided shift operator on the space
$H\otimes l^2(S^K_a)$, which corresponds to
$n=1\in\pi_1(K)$. Therefore, we have the isomorphism $\mathfrak{A}_a^K\cong C(S^1)$, where
$C(S^1)$
is the Banach algebra of all continuous complex-valued functions on the unit circle in the complex plane. The $C^*$-algebra~$\mathfrak{A}_x^L$ is generated by the operator
$\chi_{\varphi^*(n)}=I\otimes I_a$. Hence, one has the isomorphism $\mathfrak{A}_x^L\cong
\mathbb{C}$. The mapping
$\Phi_a^*:\mathfrak{A}_a^K\rightarrow\mathfrak{A}_{\varphi(a)}^L$ defined by
the correspondence  $\chi_n\mapsto \chi_{\varphi^*(n)}$ yields the following commutative diagram:
$$
\begin{CD}
\mathfrak{A}_a^K @>\Phi_a^*>> \mathfrak{A}_{\varphi(a)}^L \\
@V\cong VV @VV\cong V \\
C(S^1) @>m>>\mathbb{C}
\end{CD},
$$
where $m:C(S^1)\rightarrow\mathbb{C}$ is the multiplicative functional given by
$$
{m(f)=f(1)}.
$$
Obviously, the mapping $m$ is not an embedding.
\end{Ex}

\end{document}